\newtheorem{thm}{Theorem}
\newtheorem{prop}[thm]{Proposition}
\newtheorem{lem}[thm]{Lemma}
\newtheorem{cor}[thm]{Corollary}
\theoremstyle{definition}
\newtheorem{defn}[thm]{Definition}
\newtheorem{example}[thm]{Example}
\newtheorem{rem}[thm]{Remark}
\numberwithin{equation}{section}
\numberwithin{thm}{section}
\newcommand{\orcidicon}[1]{\href{https://orcid.org/#1}{\includegraphics[height=2.5ex]{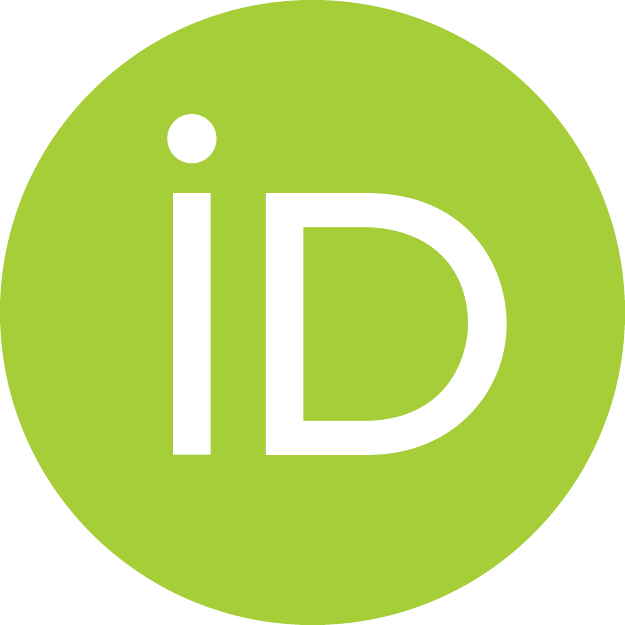}}}
\newcommand{\inv}[1]{ {#1}^{-1} }
\newcommand{\m}[1]{\mathcal{#1}}
\newcommand{\mb}[1]{\mathbb {#1}}
\newcommand{\RR}{\mathbb{R}}
\newcommand{\ZZ}{\mathbb{Z}}
\newcommand{\NN}{\mathbb{N}}
\newcommand{\QQ}{\mathbb{Q}}
\newcommand{\PP}{\mathbb{P}}
\newcommand{\be}{\begin{equation}} 
\newcommand{\ee}{\end{equation}} 
\newcommand{\ti}[1]{\widetilde {#1}}
\newcommand{\image}[1]{\mathrm{im}\ \! {#1}}
\DeclareMathOperator{\Aut}{Aut}
\DeclareMathOperator{\id}{id}
\newcommand{\set}[1]{\{#1\}}
\newcommand{\ori}{\eta}
\newcommand{\comm}{\mathsf{Com}}
\newcommand{\ass}{\mathsf{Ass}}
\newcommand{\lie}{\mathsf{Lie}}
\newcommand{\gf}{\mathsf{GF}}
\newcommand{\gp}{\mathsf{GP}}
\newcommand{\homgra}{H_1(G)}
\newcommand{\tad}{ \!\!\! {\begin{tikzpicture}[baseline=-1,scale=.66, every loop/.style={}] \coordinate  (v) at (0,0);
    \filldraw[fill=black] (v) circle (0.035);
    \draw (v) edge[loop] (v);
\end{tikzpicture}}}
\newcommand{\cube}{c(G,\gamma)}
\newcommand{\sla}{/ \! /}
\newcommand{\val}[1]{\mathrm{val}({#1})}
\begin{document}

\title{Graph complexes from the geometric viewpoint}

\author{Marko Berghoff \protect\orcidicon{0000-0002-9108-3045}}
\email{berghoffmj@gmail.com}
\address{Institut für Mathematik, Humboldt-Universit\"at zu Berlin, Germany}

\begin{abstract}
%These notes loosely follow an introductory course on graph complexes, held at Humboldt-Universität zu Berlin in summer 23. Instead of simply typing up my lecture notes I decided to give here an overview over (parts of) the topic (lecture notes can be found on my homepage).

We introduce the associative, commutative and Lie graph complexes, and moduli spaces of metric graphs, then discuss how the commutative and Lie graph complexes can be interpreted as cellular chain complexes associated to certain (pairs of) subspaces of the latter, both for ``even" and ``odd" orientations. We explain why this does not work for the associative complex and how to adjust the space of graphs to deal with this case.
Along the way we highlight how algebraic properties on one side translate into geometric statements on the other. 
\end{abstract}

\maketitle

\section{Introduction}

\subsection{Overview}
In \cite{ko1,ko2} Kontsevich introduced three types of chain complexes generated by finite graphs, with a differential defined by collapsing edges. He showed how the homology of these complexes relates to various invariants in low-dimensional topology and geometric group theory. Despite the simple combinatorial definition of these \textit{graph complexes}, computing and understanding their homology is a challenging open problem.
By now there exist many accounts on the topic, approaching it from quite different angles, many of which are very algebraic in nature. 
Although they lead to powerful methods, these algebraic approaches are often hard to digest, or even visualise. This is especially true for absolute beginners\footnote{``\textit{Fuchs' mich in die Materie, da es Möglichkeiten unbegrenzt gibt}", Samy Deluxe in \cite{beginner}.}.

Some ideas on how to view and study graph complexes from a more geometric point of view are already contained in Kontsevich's orginal works. This line of thought has been continued in many papers, most prominently in \cite{convogt} which is entirely devoted to untangle and clarify many of the arguments and constructions in \cite{ko1,ko2}. For the geometrically flavoured part Conant and Vogtmann use \textit{Outer space} \cite{cv}, a moduli space of marked graphs,\footnote{A \textit{marking} of a graph $G$ of rank $g$ is an identification of $\pi_1(G)$ with $F_g$ the free group on $g$ generators, up to homotopy; see \cref{rem:outer_space}.} as a model to ``realise" the associative, commutative and Lie graph complexes. 

`Model' and `realise' means here that we want to identify these complexes as (relative) cellular chain complexes of (pairs of) topological spaces. This representation is by clearly not unique. In fact, in the commutative and Lie case a simpler space, the moduli space of metric graphs\footnote{The modern lingo is ``(abstract) tropical curves" while ``metric graphs" were en vogue in the last century. Due to the recent return of the 90's in fashion and music I have decided to stick to the latter.} provides a natural universe in which these incarnations live \cite{cv,convogt,cgp}. The associative case does not quite fit into this story (in contrast to when working with Outer space); it requires a slightly different moduli space of (ribbon) graphs. However, the main idea and most constructions are almost identical. Therefore the same line of thought applies to this case as well.
\newline

\textbf{Disclaimer}: Almost everything that follows is in some form contained in the existing literature, at least implicitly. However, most works focus on one particular graph complex, and restrict to either its ``even" or ``odd" version. Furthermore, since there are so many variants of graph complexes and ways to define them (with a plethora of possible degree conventions etc.), it seems helpful to give a unified account, at least from the moduli space point of view---a concise introduction, covering and comparing all ways of defining and studying these complexes, is far beyond the scope of this paper. I tried to give as many references as possible, throughout the text and in particular in \cref{ss:background_and_related} below.

\begin{figure}[h]
    \centering
    \includegraphics[scale=0.35]{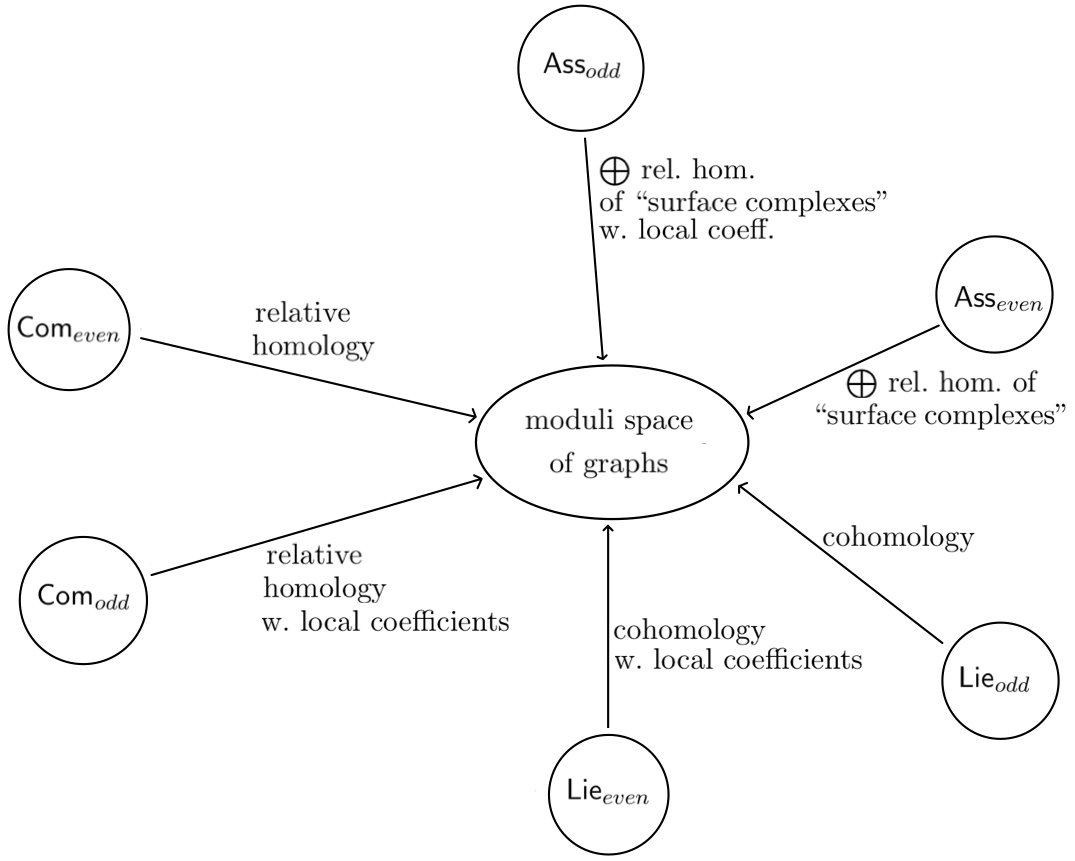}
     \caption{Relations between various graph complexes and the moduli space of graphs. An arrow $x \overset{y}{\to} z$ reads ``$x$ computes $y$ of $z$". Note that the ``surface complexes" in the associative case belong to a slightly different moduli space (see \cref{ss:gc_ass,ss:ass_cellular}).}
    \label{fig:GC_overview}
\end{figure}

\subsection{Outline of the paper}

In \cref{s:notation} we introduce some notation and review a couple of basic definitions from graph theory. With this at hand we define graph complexes in \cref{s:gc}: We start with a concise discussion of the notion of \hyperref[eq:orientation]{orientation}, both in the even and odd case, then define the \hyperref[ss:gc_comm]{commutative}, \hyperref[ss:gc_ass]{associative} and \hyperref[ss:gc_lie]{Lie} graph complexes. This includes a review of two \hyperref[sss:forested_graphs]{forested graph complexes} which are quasi-isomorphic to the latter. 

\Cref{s:moduli_space_of_graphs} introduces moduli spaces of (weighted) graphs $MG_g$ and certain subspaces thereof, most prominently the weight zero subspace $MG_g^0 \subset MG_g$ and its \textit{spine} $S_g$, a certain subcomplex of the barycentric subdivision of $MG_g$. We introduce ``natural" coordinates on $MG_g$ to prove that $MG_g^0$ deformation retracts onto $S_g$.

In \cref{s:MGandGC} we use this, and other cellular decompositions, to relate the homology of $MG_g$ to graph homology (see \cref{fig:GC_overview} for an illustration of these relations). For the even commutative and the odd Lie graph complexes we find
\begin{align*}
 H(\comm_{even}) & \cong \bigoplus_{g \in \NN} H(MG_g,MG_g\setminus MG_g^0)    , \\
 H(\lie_{odd}) & \cong \bigoplus_{g \in \NN}  H( MG_g^0)^\vee  ,
\end{align*}
up to some degree shifts.\footnote{We could here in fact work exclusively with the weight zero subspace $MG_g^0$, because $H(MG_g,MG_g\setminus MG_g^0)$ is isomorphic to the locally finite homology $H^{lf}(MG_g^0)$; see \cref{rem:locfin}.} 

After this connection is established, we use it in \cref{ss:odd_cellular} to explain how to treat $\comm_{odd}$ and $\lie_{even}$ with local coefficient systems on $MG_g$. In \cref{ss:simplify_cplxs} we use the geometric viewpoint once more to see how slightly different variants of the graph complexes (generated by other families of graphs) relate to each other (see \cref{thm:bigger_cplxs} and \cref{prop:def_retract_bivalent_edges}). This is known, but the ``standard" proof via spectral sequences is arguably more complicated.

We finish with a brief discussion of a geometric avatar for the associative graph complex.

\subsection{Background and related work} \label{ss:background_and_related}

Of course, everything is in some sense already contained in the two seminal papers \cite{ko1,ko2}. The geometric ideas outlined therein are expanded (and explained) in \cite{convogt}. 
For a similar but more algebraically flavoured overview see \cite{mahajan:symplectic}.

Other instances of the geometric viewpoint can be found in the following papers:
\begin{itemize}
    \item \cite{ConantGerlitsVogtmann:Cut} uses Outer space and relative homology with local coefficients to study the odd commutative graph complex.
    \item \cite{cgp,cgp2} studies the moduli space of tropical curves and identifies its homology with even commutative graph homology and the top weight cohomology of the moduli stack of curves $\m M_{g,n}$.
    \item \cite{Ww-Turchin:CommHairyGraphs} briefly discusses the moduli space of graphs (therein referred to as Outer space) as a geometric avatar for the commutative graph complex, and explains how its homology with local coefficients relates to group homology of $\mathrm{Out}(F_g)$---this includes the odd case (cf.\ \cref{ss:odd_cellular}).
\item \cite{brown21,brown22} constructs ``canonical" differential forms on the moduli space of graphs and uses an integration pairing with a variant of Stoke's theorem to detect homology classes in the even commutative graph complex.
\item \cite{ConantVogtmann:Morita2} discusses the family of \textit{Morita cycles} $m_k \in H_{4k}(\mathrm{Out}(F_{2k+2});\QQ)$ \cite{morita} in terms of a forested graph complex that originates from a cubical decomposition of Outer space. This is based on \cite{cv,hv}.
\end{itemize}

This list is by no means exhaustive, but should give a good starting point to look up details and further references. Most of what follows is based, more or less, on these works, in particular on \cite{convogt} and \cite{cgp}. 

Lastly, a good place to start learning about graph complexes from scratch are the lecture notes \cite{Willwacher:CPH} and \cite{Vogtmann:Intro_gc}.

\subsection{Acknowledgements}

I benefited from many illuminating discussions with Francis Brown and Karen Vogtmann on graph complexes and moduli spaces of graphs. Also many thanks to all participants of my lecture on graph complexes at HU Berlin in summer 2023.

\section{Conventions and preliminary definitions}\label{s:notation}

\subsection{Notation}

\begin{itemize}
    \item We write $x=(x_1,\ldots,x_d)$ for a vector $x \in k^d$ with $k\in \set{\mb R, \mb C}$ and $[x]=[x_1:\ldots:x_d]$ for the corresponding point in $\mb P(k^d)$ -- we sometimes omit the brackets $[ \cdot ]$ if the meaning is clear from the context. We set $\mb R_+=(0,\infty)$.
    \item If $I$ is a finite set and $k$ a field, we denote by $k^I$ the $|I|$-dimensional vector space with fixed basis the elements of $I$. 
    \item Unless denoted otherwise, all homology groups will be with rational coefficients. 
    \item If no confusion is possible, we write $a$ for a singleton $\set a$.
    \item The symbols $ \dot{\in} $ and $ \dot{\subseteq} $ mean ``\ldots pairwise different elements / pairwise disjoint subsets of \ldots ". 
\end{itemize}

\subsection{Graphs}

Graphs appear in many areas of maths and physics which often use very different terminology. We will use the following conventions:

\begin{itemize}
    \item A \textit{graph} is a tupel $G=(V_G,H_G,\iota,\epsilon)$ where $V_G$ is the set of vertices of $G$, and $H_G$ is the set of \textit{half-edges} of $G$. The map $\iota \colon H_G \to V_G$ connects half-edges to vertices. We call $\val v =|\inv{\iota}(v)|$ the \textit{valence} of $v\in V_G$. The map $\epsilon$ is an involution on $H_G$: If $\epsilon(h_1)=h_2$, then the pair $\set{h_1,h_2}$ specifies an (internal) edge of $G$. If $\epsilon(h)=h$, then we call $h$ a \textit{leg} (or external edge) of $G$. We will often omit this data and simply write $G=(V_G,E_G)$ for a graph prenteding that $G$ is specified by its sets of vertices and (internal) edges. We set $e_G=|E_G|$, $v_G=|V_G|$, and we write $h_G=\dim \homgra$ for the \textit{rank} or \textit{loop number} of $G$.
    \item  A \emph{tadpole} (self-loop) is an edge formed by two half-edges that are connected to the same vertex.
    \item A graph is \textit{directed}\footnote{This is usually called an orientation on $G$, but we will use this notion differently.} if for each edge $e \in E_G$ an order on $e=\set{h_1,h_2} $ is specified. If $e=(h_1,h_2)$ then we call $\iota(h_1) \in V_G$ the \textit{source} $s(e)$ of $e$ and $\iota(h_2) \in V_G$ the \textit{target} $t(e)$ of $e$.
    \item All graphs we consider here will be finite and connected. 
    \item A graph is \textit{core, bridge-free} or \textit{1-particle irreducible} if removing any edge reduces its rank by one. For connected graphs with all vertices of valence at least 2 this is equivalent to having no \textit{bridges}, i.e., edges whose removal disconnects the graph.
    \item A \emph{tree} is a graph $T$ such that $h_T=0$. A \emph{forest} is a disjoint union of trees.
    \item A \emph{subgraph} $\gamma \subset G$ (without legs) is a graph $\gamma$ such that $V_\gamma\subset V_G$ and $E_\gamma \subset E_G$. If $\gamma\subset G$ is a subgraph, we write $G\setminus\gamma$ for the subgraph of $G$ defined by $V_{G\setminus\gamma}=V_G$ and $E_{G\setminus\gamma}=E_G \setminus E_\gamma$. We write $G/\gamma$ for the graph obtained from $G$ by collapsing each connected component of $\gamma$ to a vertex. We call such quotient graphs \textit{cographs}.
    \item A \emph{spanning tree} of $G$ is a subgraph $T \subset G$ such that $T$ is a tree and $V_T=V_G$. A \emph{spanning forest} of $G$ is a subgraph $F \subset G$ such that $F$ is a forest and $V_F=V_G$.
    \item We sometimes abuse notation by identifying edge sets and subgraphs in the obvious way.
\end{itemize}

\subsection{Graph morphisms}

Let $G_i=(V_i,H_i,\iota_i,\epsilon_i)$, $i=1,2$, be two graphs. A \textit{graph morphism} $\varphi \colon G_1 \to G_2$ is a pair of maps 
\[
 \varphi=(\varphi_V,\varphi_H), \quad    \varphi_V \colon V_1 \to V_2, \quad  \varphi_H \colon H_1 \to H_2,
\] 
such that for any half-edge $h \in H_1$ we have $\iota_2( \varphi_H(h) ) = \varphi_V(\iota_1(h))$ and $\varphi_H(\epsilon_1(h))=\epsilon_2(\varphi_H(h))$. 

A graph morphism $\varphi\colon G_1 \to G_2$ is a \textit{graph isomorphism} if there exists a graph morphism $\psi\colon G_2 \to G_1$ with $\varphi \circ \psi = \id_{G_2} $ and $\psi \circ \varphi = \id_{G_1}$. Equivalently, $\varphi$ is an isomorphism if both $\varphi_V$ and $\varphi_H$ are bijective and $\varphi$ has a left- or right-inverse.

Note that any graph morphism $\varphi \colon G_1 \to G_2$ induces a map $\varphi_E$ on the edge sets of $G_1$ and $G_2$. We will be mainly interested in this map. To keep notation light we often abuse notation and simply denote it by $\varphi$. 

The group of automorphisms $\varphi \colon G \overset{\sim}{\to} G$ is denoted by $\Aut(G)$. Note that $G$ may have automorphisms that act trivial on $E_G$, for instance $\Aut( \! \! \!\! \scalebox{1.4}{\tad} \! \! \! )\cong \ZZ_2$.

\section{Graph complexes}\label{s:gc}

We give a short and simple definition of ``the" three\footnote{Many other variants are possible, see e.g.\ \cite{Willwacher:CPH}.} graph complexes. References for more details, equivalent definitions, and further readings are given along the way; see also the list of references in \cref{ss:background_and_related}. 
\newline

Let $G$ be a finite, connected graph and let $N \in \NN$. Define the \textbf{degree} of $G$ by 
\[
\deg_N G = e_G(1-N) + (v_G-1)N.
\]
If $G$ is connected, then $\deg_NG = e_G - Nh_G$.
\newline

For a vector space $V$ of dimension $n$ let $\det V=\bigwedge^n V$. Note that an element of $(\det V)^\times$ specifies an orientation of $V$.

\begin{defn}

An \textbf{orientation} of a graph $G$ is an orientation of the vector space $\QQ^{E_G}  \oplus \homgra$ where both $\QQ^{E_G}$ and $\homgra$ are considered as graded vector spaces, generated by elements of degree one and $N$, respectively. We write  
\be \label{eq:orientation}
  \ori \in \left(\det \QQ^{E_G}  \otimes \det \homgra \right)^\times ,
\ee
for an orientation of $G$.
\end{defn}

In other words, an orientation on $G$ is an order on $E_G$, up to even permutations, together with an orientation of the $\QQ$-vector space of its independent cycles. Since we view both as graded vector spaces, we have for $\sigma \in S_n$
\[
e_{\sigma(1)} \wedge \ldots \wedge e_{\sigma(n)} = \mathrm{sgn}(\sigma) e_1 \wedge \ldots \wedge e_n 
\]
and
\[
c_{\sigma(1)} \wedge \ldots \wedge c_{\sigma(n)} = \mathrm{sgn}(\sigma)^{N} c_1 \wedge \ldots \wedge c_n. 
\]
Hence, if $N$ is even, then $\det \QQ^{E_G} $ specifies an orientation on $G$.

In the odd case it will be useful to have an explicit representation of $\homgra$ (to compute $\det \homgra$). For this we pick a spanning tree $T \subset G$ and direct each edge in the complement of $T$. Then each $e \in E_{G\setminus T}$ represents an oriented cycle that starts along $e$ in the given direction and runs through $T$ along the unique minimal edge-path in $T$ that connects $t(e)$ with the starting vertex $s(e)$. This gives a basis of $\homgra$ which can be oriented by choosing an order on the representing edges in $G\setminus T$. To change $T$ we iteratively exchange edges from $T$ and $G\setminus T$. In each step $E_{G\setminus T} \ni e_1 \leftrightarrow e_2 \in E_T$, the orientation of $e_1$ determines an orientation of $e_2$ (by the mechanism described above---note that $e_2$ must lie on the edge-path specified by $e_1$), and the order on the cycles stays the same (the cycle represented by $e_1$ replaces the cycle represented by $e_2$).

\begin{rem}\label{rem:simplicial_orient}
     The above recipe works for singular homology. Note that if we want to compute the homology of $G$ as a simplicial or CW-complex, we need to direct \emph{every} edge. Only then can we translate the above representation into the usual representation of cycles as (ordered) $\ZZ$-linear combinations of edges. Of course, $\homgra$ is independent of this choice, but $\det \homgra$ depends on this choice, up to an even number of ``direction changes". 
\end{rem}

\begin{example}
Let $G$ be the theta graph depicted on the left hand side of \cref{fig:theta}. We order $E_G$ by $e_1<e_2<e_3$. The graph in the middle encodes a representation $\homgra=\langle c_1,c_2 \rangle$ where $c_i$ travels from $v_1$ to $v_2$ and back to $v_1$ along $e_i$ and $T=e_3$ (if all $e_i$ are directed from $v_1$ to $v_2$ (in the sense of \cref{rem:simplicial_orient}), then $c_1= e_1-e_3$ and $c_2=e_2-e_3$). The numbers over the directed edges encode the order, that is, the orientation on $\homgra$ is given by $c_1 \wedge c_2$. If we choose a different tree $T$, e.g.\ $T=e_2$, we get a new ordered basis $(c_1',c_2')$: The cycle $c_1'$ runs from $v_1$ to $v_2$ along $e_1$ and then back to $v_1$ via $e_2$, the cycle $c_2'$ runs along $e_3$ from $v_2$ to $v_1$ and then back to $v_2$ via $e_2$ (hence $c_1'=e_1-e_2$ and $c_2'=c_2$). In terms of orientations,
\[
\ori   = (e_1 \wedge e_2 \wedge e_3) \otimes (c_1 \wedge c_2) =  (e_1 \wedge e_2 \wedge e_3) \otimes (c_1' \wedge c_2').
\]
Note that the ``opposite" orientation is given by reversing the direction of one of the edges that specify the cycles. For instance, if we flip the direction on $e_1$ in the middle graph, then this represents the basis $(-c_1,c_2)$. This is the same as swapping the order of the $c_i$, since $-c_1 \wedge c_2 = c_2 \wedge c_1$. The same holds for the $c_i'$.
\end{example} 

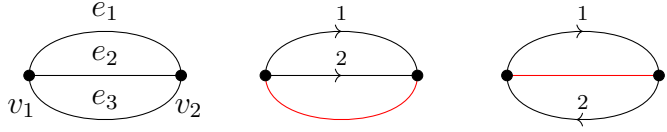
\begin{figure}[h]
   \begin{tikzpicture}[scale=1]
\coordinate (v0) at (0,0) node {};
  \coordinate  (v1) at (0,2) node[above,xshift=-.1cm,yshift=1.3cm] {$v_1$};
   \coordinate  (v2) at (2,2) node[above,xshift=2.1cm,yshift=1.3cm] {$v_2$};
   \draw (v1) to[out=90,in=90] node[above] {$e_1$} (v2);
   \draw (v1) -- node[above] {$e_2$} (v2);
   \draw (v1) to[out=-90,in=-90] node[above] {$e_3$} (v2);
  \filldraw[fill=black] (v1) circle (0.07);
  \filldraw[fill=black] (v2) circle (0.07);
  \end{tikzpicture}
  \quad 
   \begin{tikzpicture}[scale=1]
\coordinate (v0) at (0,0) node {};
  \coordinate  (v1) at (0,2);
   \coordinate  (v2) at (2,2);
   \draw[decoration={markings, mark=at position 0.5 with {\arrow{>}}},postaction={decorate}] (v1) to[out=90,in=90] node[above] {$\scriptstyle{1}$} (v2);
   \draw[decoration={markings, mark=at position 0.5 with {\arrow{>}}},postaction={decorate}] (v1) -- node[above] {$\scriptstyle{2}$}  (v2);
   \draw[red] (v1) to[out=-90,in=-90] (v2);
  \filldraw[fill=black] (v1) circle (0.07);
  \filldraw[fill=black] (v2) circle (0.07);
  \end{tikzpicture}
  \qquad
   \begin{tikzpicture}[scale=1]
\coordinate (v0) at (0,0) node {};
  \coordinate  (v1) at (0,2);
   \coordinate  (v2) at (2,2);
   \draw[decoration={markings, mark=at position 0.5 with {\arrow{>}}},postaction={decorate}] (v1) to[out=90,in=90] node[above] {$\scriptstyle{1}$} (v2);
   \draw[red] (v1) -- (v2);
   \draw[decoration={markings, mark=at position 0.5 with {\arrow{<}}},postaction={decorate}] (v1) to[out=-90,in=-90] node[above] {$\scriptstyle{2}$} (v2);
  \filldraw[fill=black] (v1) circle (0.07);
  \filldraw[fill=black] (v2) circle (0.07);
  \end{tikzpicture}
    \caption{The theta graph and two bases for $\homgra$ represented by the choice of a spanning tree $T\subset G$ and directions on edges in the complement of $T$.}
    \label{fig:theta}
\end{figure}

\begin{rem}\label{rem:odd_orient}
    If $N$ is odd, then specifying an orientation of $G$ is equivalent to ordering its vertices and orienting each edge, both up to even permutations; see \cite[\S 2.3.1]{convogt}. This variant is much easier to handle when computing differentials (\cref{eq:orient_collapse_odd} below). However, our notion will become useful later in \cref{s:MGandGC}.
\end{rem}

\subsection{The commutative graph complex $\comm_N$} \label{ss:gc_comm}
 
Let $N\in \NN$.
Let $\comm_N$ be the $\QQ$-vector space spanned by linear combinations of pairs $(G,\ori)$ where $G$ is connected, has neither tadpoles nor vertices of valence less than three, and $\eta$ is an orientation of $G$, subject to the relations
\begin{align}
  \label{eq:orient_i}& (G,-\ori)  = -(G,\ori),\tag{$i$} \\ 
 \label{eq:orient_ii} & (G,\ori) = (G', \varphi^* \ori) \text{ for any isomorphism } \varphi \colon G' \overset{\sim}{\to} G. \tag{$ii$}
\end{align}
Write $[G,\ori]$ for the equivalence class of $(G,\ori)$. We abbreviate it by $[G]$, if a specific choice of orientation is not important.

Sometimes it is useful to work in larger complexes. For this we define $\comm_N^{\geq2}$, $\comm_N^{\tad}$ and $\comm_N^{\geq2,\tad}$ as variants of $\comm_N$ which allow for vertices of valence 2, tadpoles, and both, respectively.

\begin{lem}
    If $N$ is even and $\varphi \in \Aut(G)$ induces an odd permutation on $E_G$, then $[G]=0$. In particular, all graphs with multi-edges vanish in $\comm_N$ for even $N$. 
    If $N$ is odd, then all graphs with tadpoles vanish in $\comm_N^{\tad}$.
\end{lem}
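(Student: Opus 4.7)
The plan centers on the following general principle: if $\varphi \in \Aut(G)$ satisfies $\varphi^*\eta = -\eta$, then by \eqref{eq:orient_ii} we have $[G,\eta] = [G,\varphi^*\eta]$, which in turn equals $-[G,\eta]$ by \eqref{eq:orient_i}. Since $\comm_N$ is a $\QQ$-vector space, this forces $[G,\eta]=0$. Both assertions therefore reduce to producing an orientation-reversing automorphism.

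For the first part, assume $N$ is even. The sentence after the orientation definition says that $\det\QQ^{E_G}$ alone specifies an orientation on $G$, so (up to a fixed reference choice of cycle orientation) $\eta$ is determined by an ordering of $E_G$ modulo even permutations. An automorphism $\varphi$ inducing an odd permutation of $E_G$ therefore satisfies $\varphi^*\eta = -\eta$, whence $[G]=0$. The multi-edge corollary follows immediately: if $e_1, e_2$ are parallel edges between vertices $u,v$ with half-edges $\{h^u_1,h^v_1\}$ and $\{h^u_2,h^v_2\}$ respectively, then the involution on $H_G$ swapping $h^u_1 \leftrightarrow h^u_2$ and $h^v_1 \leftrightarrow h^v_2$ is easily checked to commute with $\iota$ and $\epsilon$, hence lies in $\Aut(G)$, and it induces the transposition $(e_1\ e_2)$ on $E_G$.

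For the tadpole statement I would switch to the equivalent odd-case description of orientations from \cref{rem:odd_orient}: for $N$ odd, an orientation is a pair (ordering of $V_G$, direction on each edge) modulo simultaneous even permutations of the vertex order and an even number of edge-direction reversals. Given a tadpole $e$ at $v$ with half-edges $h_1, h_2$, the involution swapping $h_1 \leftrightarrow h_2$ is an automorphism of $G$ that fixes $V_G$ and $E_G$ pointwise but reverses the direction of $e$. A single direction flip toggles the orientation class, so $\varphi^*\eta = -\eta$ and $[G]=0$ by the principle above.

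The main subtlety is pinning down the sign of $\varphi^*$ on $\eta$ in each case. The two shortcuts I would exploit are: for $N$ even, reducing to edge orderings via the sentence after the definition; for $N$ odd, switching to the vertex-plus-direction model of \cref{rem:odd_orient}, under which the tadpole swap is manifestly a single direction flip. These reductions sidestep a direct computation of $\varphi^*$ on $\det H_1(G)$, which would otherwise require expanding cycles in a spanning-tree-adapted basis and tracking the induced linear action.
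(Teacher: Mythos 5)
Your proof is correct and follows essentially the same route as the paper: exhibit an orientation-reversing automorphism and combine relations \eqref{eq:orient_i} and \eqref{eq:orient_ii} to force $[G,\eta]=0$, with the tadpole flip reversing orientation (you verify this via \cref{rem:odd_orient}, the paper directly on $\homgra$ — the same fact). You merely spell out details the paper leaves implicit, such as the explicit half-edge swap realizing the transposition of parallel edges.
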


 \begin{proof}
 For the odd case observe that reversing the direction of a tadpole is an automorphism of $G$ that induces an orientation-reversing map on $\homgra$. In both cases, given an ``odd" automorphism $\varphi$ we have $(G,\ori) \overset{(ii)}{=} (G, \varphi^*\ori) \overset{(i)}{=}-(G,\ori )$ and therefore $ [G,\ori]=0.$
\end{proof}

\begin{figure}[h]
\begin{tikzpicture}
\coordinate (c) at (0,0);
    \fill (c) circle (0.06cm);
  \draw (c) circle (1.15cm);
\foreach \n in {0,...,5} \coordinate (k\n) at (90+360/5*\n:1.15);
\foreach \n in {0,...,5} \draw (c) -- (k\n);
\foreach \n in {0,...,5} \filldraw[black] (k\n) circle (0.05cm);
\end{tikzpicture}
\caption{The wheel $W_5$ with 5 spokes.}
    \label{fig:wheel}
\end{figure}
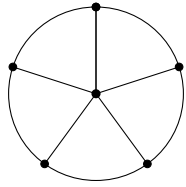

\begin{example}\label{eg:vanishing}
Consider three families of graphs, cycles $C_n$ on $n$ vertices/edges, wheels $W_n$ with $n$ spokes (see \cref{fig:wheel}), and bananas $B_n$ (two vertices connected by $n$ edges). 

 If $N$ is even, then $[G]=0$ for $G \in \set{W_{2n},B_n \mid n\geq 1}$ or for $G=C_n$ with $n\equiv 0,2,3 \mod 4$ since all these graphs have odd symmetries. On the other hand, the graphs $W_{2n+1}$ and $C_{4n+1}$ for $n\geq 1$ have only even symmetries, hence represent non-zero classes in $\comm_N$.

 If $N$ is odd, then from \cref{rem:odd_orient} we immediately deduce that $[B_n]=0$ if and only if $n$ is even. For the cycles we find that $C_n$ is non-zero if and only if $n\equiv 3 \mod 4$. Each even wheel $W_{2n}$ has a reflection symmetry which induces an even permutation on $E_{W_{2n}}$ and is orientation-reserving on $H_1(W_{2n})$, hence $[W_{2n}]=0$. The same argument works for the odd wheels; all symmetries are even on the edges and orientation-preserving on $H_1(W_{2n+1})$, except for the reflections which swap $2n$ cycles. Their sign depends on the parity of $n$. We conclude that $[W_{2n+1}]=0$ if and only if $n$ is even.
\end{example}

We equip $\comm_N$ with a differential $d \colon \comm_N \to \comm_N$ which is defined by collapsing edges. For this let 
\[
G\sla e = \begin{cases}
    0 & \text{ if $e$ is a tadpole,} \\
    G/e & \text{ else.}
\end{cases} 
\]
Define $d$ as the linear map that acts on generators by
\be \label{eq:d_in_comm}
d [G,\ori] = \sum_{e \in E_G} [G\sla e, \eta_{G\sla e}],
\ee
where $\eta_{G\sla e}$ is obtained from $\ori$ as follows:
If
\[
\ori = e_1 \wedge \cdots \wedge e_n \otimes c_1 \wedge \cdots \wedge c_g,
\]
then 
\be \label{eq:orient_collapse_odd}
\eta_{G\sla e_i} = (-1)^{i} e_1 \wedge \cdots \wedge \widehat{e_i} \wedge \cdots \wedge e_n \otimes \xi_e^*(c_1 \wedge \cdots \wedge c_g)
\ee
where $\xi_e \colon H_1(G/e) \overset{\sim}{\to} \homgra$ is the isomorphism on homology induced by collapsing the (non-tadpole) edge $e$. Concretely, this means we choose a representation of $\homgra$ (and hence of $\ori$) with $e\in T$, then $\xi_e^*(c_1 \wedge \cdots \wedge c_g)=c_1 \wedge \cdots \wedge c_g$.\footnote{To find the induced orientation on $G/e$ in terms of \cref{rem:odd_orient} we permute the vertex order so that $e$ points from the first to the second vertex, then collapse $e$ and keep the order and direction on the remaining vertices and edges.} Recall that for $N$ even only the order on $E_G$ matters, so in this case it suffices to write
\be \label{eq:orient_collapse_even}
\eta_{G\sla e_i} = (-1)^{i} e_1 \wedge \cdots \wedge \widehat{e_i} \wedge \cdots \wedge e_n. 
\ee

A simple counting argument shows
\begin{lem}
$d^2=0$.
\end{lem}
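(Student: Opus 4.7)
The plan is the familiar double-collapse argument: expand
\[
d^2[G,\ori] = \sum_{i} \sum_{j\neq i} \big[(G\sla e_i) \sla e_j,\, \eta_{(G\sla e_i) \sla e_j}\big]
\]
and show that the summands cancel in pairs $(e_i,e_j) \leftrightarrow (e_j,e_i)$. First I would dispose of the degenerate contributions: since graphs in $\comm_N$ carry no tadpoles, the only way an iterated collapse can return $0$ is if collapsing $e_i$ turns $e_j$ into a tadpole. A direct inspection of endpoints shows this happens precisely when $e_i$ and $e_j$ form a bigon, and the same statement holds symmetrically in the swapped order, so both terms of such a pair vanish on their own.

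For every remaining pair of distinct edges the underlying cographs agree in both orders, $(G/e_i)/e_j = G/\set{e_i,e_j} = (G/e_j)/e_i$, so it suffices to show that the two induced orientations differ by a sign. On the edge factor this is the standard Koszul computation: starting from $\ori = e_1 \wedge \cdots \wedge e_n \otimes \omega$, two applications of \cref{eq:orient_collapse_odd} in the order $(e_i,e_j)$ with $i<j$ produce the sign $(-1)^i (-1)^{j-1} = (-1)^{i+j-1}$, whereas the swapped order $(e_j,e_i)$ produces $(-1)^j (-1)^i = (-1)^{i+j}$, which differ by $-1$. On the homology factor one argues that both composites $\xi_{e_i} \circ \xi_{e_j}$ and $\xi_{e_j} \circ \xi_{e_i}$ coincide with the canonical isomorphism $H_1(G/\set{e_i,e_j}) \xrightarrow{\sim} \homgra$ induced by lifting cycles through the quotient $G \twoheadrightarrow G/\set{e_i,e_j}$, whence $\xi_{e_j}^* \xi_{e_i}^* = \xi_{e_i}^* \xi_{e_j}^*$. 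Combining the opposite edge signs with the equal $H_1$-pullbacks gives the desired cancellation.

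The main technical point is the functoriality of the $H_1$-pullbacks. The cleanest verification is to pick a spanning tree $T\subset G$ containing both $e_i$ and $e_j$: then $T/e_i$ is a spanning tree of $G/e_i$ still containing $e_j$, and collapsing $e_j$ further yields a spanning tree of $G/\set{e_i,e_j}$. The basis of $\homgra$ associated with the edges of $G\setminus T$, which is preserved pointwise by $\xi$ by construction, passes unchanged through both iterated collapses, so the two iterated pullbacks act identically on a fixed generator of $\det \homgra$. For even $N$ this homology step is vacuous and the whole argument reduces to the Koszul sign calculation.
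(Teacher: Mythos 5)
Your argument is correct and is exactly the ``simple counting argument'' the paper merely alludes to: pairing the two orders of collapsing $e_i$ and $e_j$, checking the opposite Koszul signs on the edge factor, noting the $H_1$-pullbacks agree (via a spanning tree containing both edges, which exists precisely because the bigon case has been excluded), and observing that the bigon/tadpole terms vanish in both orders. You have simply written out in full the details the paper leaves to the reader, so there is nothing to add.
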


The chain complex $(\comm_N,d)$ is called the (even/odd) \textbf{commutative graph complex} and its homology is the (even/odd) \textbf{commutative graph homology},
\[
H(\comm_N)=\ker d / \image{d}.
\]
It is bigraded by the degree $\deg_N$ and the loop-number $h_G$. Note that $d$ does not change the loop-number of a graph, hence $\comm_N$ splits into a direct sum of complexes
\[
\comm_N= \bigoplus_{g \geq 1} \comm_N^{(g)},
\]
where $\comm_N^{(g)}$ is the subcomplex generated by graphs with $g$ loops.

All complexes $\comm_N$ for $N$ even are isomorphic, and likewise for $N$ odd. There is no known relation between the even and odd case.

\begin{rem}
 One can also define \textbf{graph cohomology} by viewing $\comm_N$ as a cochain complex with a differential $d^\vee$ that adds edges in all possible ways. More algebraically phrased, there is a Lie algebra structure on $\comm_N$ where $[G,H]$ is defined by summing over all ways of inserting $G$ into $H$ and vice versa. The differential $d^\vee$ arises then from the Lie bracket with the graph consisting of a single edge, $d^\vee(G)= [G,e]$. For details we refer to \cite{Willwacher:GRT}; see also \cite{kwc-dogc}.  
\end{rem}

We can also consider graph homology of larger complexes $\comm_N^i$, $i\in \set{\geq2,\tad,\set{\geq2,\tad}}$, that allow for graphs with bivalent vertices, with tadpoles, and both, respectively. However, the following theorem shows that all the interesting homology is concentrated in $\comm_N$.

\begin{thm}[cf.\ \cite{ko1,ko2,Willwacher:GRT}] \label{thm:bigger_cplxs}
The various commutative graph homologies are related by 
    \begin{equation*}
         H(\comm_N^{\geq2}) = H(\comm_N) \oplus \begin{cases}
             \bigoplus_{n>0} \QQ[C_{4n+1}] & \text{ if $N$ is even,} \\
             \bigoplus_{n\geq0} \QQ[C_{4n+3}] & \text{ if $N$ is odd.}
         \end{cases} 
    \end{equation*} 
If $N$ is odd, then $H(\comm_N^{\tad}) = H(\comm_N)$. For $N$ even they differ by a single additional class in degree $1-N$, generated by $[ \!\begin{tikzpicture}[baseline=-1,scale=1, every loop/.style={}] \coordinate  (v) at (0,0);
    \filldraw[fill=black] (v) circle (0.035);
    \draw (v) edge[loop] (v);
\end{tikzpicture}\!]$.
Therefore 
\begin{equation*}
  H(\comm_N^{\tad\!,\geq2})= H(\comm_N) \oplus  \begin{cases}
             \bigoplus_{n\geq0} \QQ[C_{4n+1}] & \text{ if $N$ is even,} \\
             \bigoplus_{n\geq0} \QQ[C_{4n+3}] & \text{ if $N$ is odd.}
         \end{cases} 
\end{equation*}
\end{thm}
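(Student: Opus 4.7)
My strategy is to compare each enlarged complex with $\comm_N$ via a short exact sequence of chain complexes, analysing both the homology of the quotient and the vanishing of the connecting homomorphism.

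For the bivalent case, observe first that $\comm_N \hookrightarrow \comm_N^{\geq 2}$ is a subcomplex: collapsing an edge $e=(v_1,v_2)$ between two $\geq 3$-valent vertices produces a merged vertex of valence $\val{v_1}+\val{v_2}-2 \geq 4$, so $d$ never creates a bivalent vertex. Let $Q := \comm_N^{\geq 2}/\comm_N$; its generators are (classes of) graphs carrying at least one bivalent vertex. Graphs with \emph{only} bivalent vertices are the pure cycles $C_n$, $n\geq 2$, and they form a subcomplex $Q_{\mathrm{cyc}} \subset Q$ since $C_n/e \cong C_{n-1}$ for every edge $e$; set $Q_{\mathrm{mix}} := Q/Q_{\mathrm{cyc}}$. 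The computation of $H(\comm_N^{\geq 2})$ then reduces to three tasks: (i) compute $H(Q_{\mathrm{cyc}})$; (ii) show $H(Q_{\mathrm{mix}}) = 0$; (iii) show that the connecting map $\delta\colon H(Q) \to H(\comm_N)[-1]$ vanishes.

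Tasks (i) and (iii) will be straightforward. By \cref{eg:vanishing}, $[C_n]$ is nonzero in $Q_{\mathrm{cyc}}$ precisely when $n \equiv 1 \pmod 4$ (for $N$ even), respectively $n \equiv 3 \pmod 4$ (for $N$ odd). For any such nonzero $C_n$, the formula $d[C_n] = \sum_{i=1}^n \pm [C_{n-1}]$ automatically vanishes already in $\comm_N^{\geq 2}$, because $n-1$ lies in the opposite parity class for which \cref{eg:vanishing} forces $[C_{n-1}]=0$. Hence $d|_{Q_{\mathrm{cyc}}} \equiv 0$, giving $H(Q_{\mathrm{cyc}})$ equal to the claimed direct sum, and the identity $dC_n = 0$ in $\comm_N^{\geq 2}$ makes the snake-lemma representative of $\delta[C_n]$ equal to zero as well.

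Task (ii) is the main obstacle. I would parameterise each generator of $Q_{\mathrm{mix}}$ by a pair $(\bar G, (n_1, \ldots, n_m))$, where $\bar G \in \comm_N$ is the ``topological smoothing'' obtained by merging every maximal chain of bivalent vertices into a single edge, and $(n_1, \ldots, n_m) \neq (0, \ldots, 0)$ records the number of bivalent vertices inserted on each of the $m$ edges of $\bar G$. The filtration of $Q_{\mathrm{mix}}$ by $e_{\bar G}$ is compatible with $d$, since within-chain collapses preserve $\bar G$ while the remaining (topology-changing) collapses strictly decrease $e_{\bar G}$. On the associated graded, the part attached to a fixed $\bar G$ with $m$ edges is (up to a degree shift and the exclusion of the zero tuple) the tensor product $\bigotimes_{i=1}^m V_i$, where $V_i$ records the possible chain lengths on the $i$-th edge with the signed sum of within-chain collapses as its differential. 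A short sign computation, using the formulas for $\eta_{G\sla e}$ and the fact that collapsing within a bivalent chain does not affect $H_1$, shows $H(V_i) = \QQ$ concentrated in degree $0$; K\"unneth then gives $H(\bigotimes V_i) = \QQ$ in multidegree $(0, \ldots, 0)$, which is precisely the excluded generator. Hence $E^1 = 0$ and the spectral sequence yields $H(Q_{\mathrm{mix}}) = 0$. Combined with (i) and (iii), the long exact sequence of $0 \to \comm_N \to \comm_N^{\geq 2} \to Q \to 0$ produces the first assertion.

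The tadpole cases run in parallel. Let $T \subset \comm_N^{\tad}$ be the subcomplex of graphs with at least one tadpole; it is closed under $d$ because collapsing a tadpole gives zero while collapsing any other edge preserves the remaining tadpoles. For $N$ odd, every generator of $T$ vanishes by the lemma preceding the theorem, so $H(\comm_N^{\tad}) = H(\comm_N)$. For $N$ even, I would argue exactly as in (ii) that the subcomplex $T' \subset T$ of graphs with either $\geq 2$ tadpoles or a tadpole coexisting with at least one non-tadpole edge is acyclic (one tensor factor per tadpole, together with a $\comm_N$-type factor for the non-tadpole skeleton). This leaves $T/T' \cong \QQ[\tad]$ in degree $1-N$, and $\delta$ vanishes on this class because $d[\tad]=0$ already in $\comm_N^{\tad}$. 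The $\comm_N^{\tad, \geq 2}$ case follows by iterating the program, e.g.\ through $0 \to \comm_N^{\geq 2} \to \comm_N^{\tad, \geq 2} \to T_{\geq 2} \to 0$ with $T_{\geq 2}$ the tadpole part of $\comm_N^{\tad,\geq 2}$, which is handled by the same K\"unneth-type argument. As the introduction hints, both acyclicity steps should admit a much cleaner treatment via the geometric viewpoint of \cref{ss:simplify_cplxs}.
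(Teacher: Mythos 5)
Your treatment of the bivalent case is essentially the paper's own argument in different packaging: the paper splits off the pure cycles and the subcomplex $\comm_N^{2,3}$ of graphs with both a bivalent and a stable vertex as direct summands (using the cancellation of $G/e_1$ against $G/e_2$ at a lone bivalent vertex), and kills $\comm_N^{2,3}$ by a spectral sequence whose first differential acts on edges labeled by the number $k$ of bivalent vertices, sending $k$ to $0$ if $k$ is odd and to $k-1$ if $k$ is even---this is exactly your per-edge complex $V_i$, and your filtration by $e_{\bar G}$ is the same spectral sequence; your short-exact-sequence bookkeeping with the vanishing connecting map is an equivalent alternative to the direct-sum splitting. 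Two small points deserve a word: the smoothing $\bar G$ need not lie in $\comm_N$, because a chain of bivalent vertices closing up at a single stable vertex smooths to a tadpole; for such a chain the state $n_i=0$ is absent and the two collapses at $n_i=1$ land on forbidden tadpole graphs, hence are zero, so that factor is acyclic and $E^1=0$ still holds, but this is not covered by your ``excluded zero tuple'' phrasing. Also, the graded piece for fixed $\bar G$ is really the coinvariants of $\bigotimes_i V_i$ under the relevant automorphism group, which is harmless over $\QQ$ but should be said.

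The genuine gap is the tadpole case for $N$ even. You propose to prove acyclicity of $T'$ ``exactly as in (ii), one tensor factor per tadpole, together with a $\comm_N$-type factor for the non-tadpole skeleton''. This does not transfer: a tadpole, unlike a chain of bivalent vertices, carries no internal differential (it is never shortened, and $G\sla e=0$ when $e$ is a tadpole), so there is no analogue of ``$H(V_i)=\QQ$ concentrated in degree $0$'' to make a K\"unneth argument collapse anything; your proposed decomposition would merely compute the homology of a complex of tadpole-decorated skeleta, and there is no formal reason for that to vanish. Moreover the skeleton is not ``$\comm_N$-type'': a tadpole contributes $2$ to the valence of its vertex, so deleting the tadpoles can leave univalent and bivalent vertices. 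This is precisely why the paper organizes tadpole graphs into a base graph with ``antennas'' (rooted trees with at least trivalent vertices and tadpole leaves) and runs a double complex with differentials $d_a$ (antenna collapses) and $d_b$ (base collapses), filtered by base edges; the real content is the acyclicity in the antenna direction (alternatively the more involved geometric argument in \cite{cgp2}), and nothing in your sketch supplies it. Until that acyclicity is proved, your claims about $H(\comm_N^{\tad})$ for $N$ even and about $\comm_N^{\tad,\geq2}$ are unsupported; the odd-$N$ tadpole case, the identification of $T/T'$ with the span of the lone tadpole class in degree $1-N$, and the vanishing of the connecting map there are fine.
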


\begin{proof}
    From \cref{eg:vanishing} it is clear that the cycles $C_{4n+1}$ represent non-trivial homology classes. To show the first statement, note that $\comm_N^{\geq2}$ splits into a direct sum of complexes
    \[
    \comm_N^{\geq2}= \comm_N^{\neq2} \oplus \comm_N^{=2}
    \]
    where $\comm_N^{\neq2}$ consists of graphs with at least one vertex of valence 3 and $\comm_N^{=2}=\QQ \langle [C_n] \mid n>0 \rangle$. 
    
    In \cref{ss:simplify_cplxs} we will argue geometrically that $\comm_N^{\neq2}$ is quasi-isomorphic to $\comm_N$. Here we sketch the algebraic argument. 
    
    The complex $\comm_N^{\neq2}$ splits further into $\comm_N \oplus \comm_N^{2,3}$ where the latter consists of graphs with at least one vertex of valence 2 and at least one of valence 3. This is indeed a subcomplex since if $G$ has only a single vertex of valence 2, connected to $e_1$ and $e_2$, then $G/e_1$ cancels $G/e_2$ in $dG$ so that $dG\in \comm_N^{2,3}$. One can use the following model to show that this complex is acyclic: Any vertex of valence 2 lies on a unique edge-path connecting two vertices of valence $\geq 3$. Replace in each graph in $\comm_N^{2,3}$ these edge-paths by edges labeled by the number of vertices in the path. Let $T$ denote the total complex associated to a double complex spanned by such labeled graphs where one differential $d_u$ collapses unlabeled edges and the other $d_\ell$ acts on labeled edges by sending an edge with label $k$ to zero if $k$ is odd, and to $k-1$ if $k$ is even. Now set up a spectral sequence whose first page differential is $d_\ell$. It converges to zero, hence $H(\comm_N^{2,3})=0$.
    
    For the second (and third) statement a similar spectral sequence argument applies.\footnote{This also has a geometric variant, but the argument is quite involved; see \cite[\S 4.2]{cgp2}.} $\comm_N^{\tad}$ splits into a direct sum of subcomplexes, $\comm_N$ and a complex of graphs with tadpoles. The latter is acyclic: A graph with tadpoles consists of a base graph (with vertices of valence $\geq2$) to which some ``antennas" are attached; here an antenna is a rooted tree with at least trivalent vertices and leaves decorated by tadpoles. Now model the subcomplex generated by tadpole graphs by a double complex $T$ with differentials $d_a$ (collapse inner edges of antennas) and $d_b$ (collapse base edges). Filter $T$ by number of base edges. Then the associated spectral sequence has trivial first page, hence converges to zero. 
\end{proof}

The overall structure of $H(\comm_N)$ is not well understood. Euler characteristic computations \cite{WZ:euler_char} show that there are many non-trivial classes, but only a few concrete examples are known. Willwacher showed in \cite{Willwacher:GRT} that $H^0(\comm_2)\cong \mathfrak{grt}$ with $\mathfrak{grt}$ denoting the Grothendieck-Teichmüller Lie algebra. The right hand side contains a free Lie algebra  on generators $\sigma_{2n+1},n >0$. Their representatives in graph cohomology pair non-trivially with the wheels $W_{2n+1}$ (see \cite[\S 9]{Willwacher:GRT}, also \cite[\S1.3]{brown21}). 

\begin{rem}\label{rem:1vi}
  A graph is \textit{1-vertex irreducible} if it stays connected after removing any vertex together with its adjacent edges. It is shown in \cite{ConantGerlitsVogtmann:Cut} that $\comm_N$ is quasi-isomorphic to the quotient $\comm_N/\comm_N^{1vr}$ where $\comm_N^{1vr}$ denotes the subcomplex of 1-vertex \textit{reducible} graphs. Note that the latter contains the subspace generated by graphs with bridges, since if $G$ has a bridge $e=\set{v,w}$, then removing either $v$ or $w$ disconnects $G$.
\end{rem}

\subsection{The associative graph complex $\ass_N$} \label{ss:gc_ass}

The associative case is very similar to the commutative case, except that we put additional structure on graphs, specifying at every vertex a cyclic ordering of the incident edges. 

A \textbf{ribbon} or \textbf{fat graph} is a graph  
$G=(V_G,H_G,\iota,\epsilon,\set{\sigma_v}_{v\in V(G)})$ where $\sigma_v$ is a cyclic order of $\inv{\iota}(v)\subset H(G)$.
Define degree and orientations as in the commutative case and let $\ass_N$ be the rational vector space spanned by linear combinations of ribbon graphs without tadpoles and with all vertices of valence at least three, subject to the relations
\eqref{eq:orient_i} and  \eqref{eq:orient_ii}---the latter with respect to isomorphisms of ribbon graphs.

The differential $d \colon  \ass_N \to \ass_N$ is defined as above where the ribbon structure of $G\sla e$ is obtained as follows: Let $e \in E_G$ be specified by $\epsilon(h)=h'$ for $ h,h' \in H_G$, and let $\epsilon(h)=v$, $\epsilon(h')=w$. If $\sigma_v=(h_{i_1} \cdots h_{i_s})$ and $\sigma_w=(h_{j_1} \cdots h_{j_k})$ such that $h_{i_s}=h$ and $h_{j_1}=h'$, then the ribbon structure at the new vertex $u\in V_{G\sla e}$ which is the image of $v$ and $w$ under the collapse of $e$ is
\[
\sigma_{u}=(h_{i_1} \cdots h_{i_{s-1}} h_{j_2} \cdots h_{j_k}).
\]
One checks that $d^2=0$. 

The \textbf{associative graph complex} is the complex $(\ass_N,d)$ and \textbf{associative graph homology} is defined as $H(\ass_N)=\ker d / \image{d}$. It is bigraded by degree and loop-number, the latter being invariant under the differential $d$.

A ribbon graph $G$ can be ``fattened" to an oriented surface $S_G$ (\cite{Penner:Fatgraphs}, see also \cite[\S 4.1.1]{convogt}). This surface is unique (up to homeomorphism) among compact, connected, oriented surfaces $S$ with $\pi_1(S)=\pi_1(G)$ that admit an embedding $f \colon G \to S$ such that the ribbon structure of $G$ is induced by the orientation of $S$. See \cref{fig:fat_graphs} for an example. Note that collapsing an edge does not change this property, hence $S_G \cong S_{G/e}$ for each (non-tadpole) $e\in E(G)$. It follows that 
\[
\ass_N = \bigoplus_{g\geq 1} \ass_N^{(g)} = \bigoplus_{g\geq 1} \bigoplus_{ \\ \pi_1(S)=F_g } \ass_N^{(g,S)}
\]
where the second sum runs over homeomorphism classes of compact, connected oriented surfaces $S$ with fundamental group isomorphic to the free group on $g$ generators. Each complex $\ass_N^{(g,S)}$ computes the cohomology of the mapping class group $\mathrm{Mod}(S)$ of $S$ \cite[\S4.2]{convogt}. 

\begin{figure}[h]
  \hspace{1.8cm} \begin{tikzpicture}[scale=1]
\coordinate (v0) at (0,0);
  \coordinate  (v1) at (0,2);
   \coordinate  (v2) at (2,2);
   \draw (v1) node[above,yshift=.2cm] {$\scriptscriptstyle{1}$} to[out=90,in=90] (v2) node[above,yshift=.2cm] {$\scriptscriptstyle{1}$};
   \draw (v1) node[above,xshift=.2cm,yshift=-.1cm] {$\scriptscriptstyle{3}$} -- (v2) node[above,xshift=-.2cm,yshift=-.1cm] {$\scriptscriptstyle{2}$};
   \draw (v1) node[below,yshift=-.2cm] {$\scriptscriptstyle{2}$} to[out=-90,in=-90]  (v2) node[below,yshift=-.2cm] {$\scriptscriptstyle{3}$};
  \filldraw[fill=black] (v1) circle (0.06);
  \filldraw[fill=black] (v2) circle (0.06);
  \end{tikzpicture}
  \hspace{1.5cm} 
     \begin{tikzpicture}[scale=1]
\coordinate (v0) at (0,0);
  \coordinate  (v1) at (0,2);
   \coordinate  (v2) at (2,2);
   \draw (v1) node[above,yshift=.2cm] {$\scriptscriptstyle{1}$} to[out=90,in=90] (v2) node[above,yshift=.2cm] {$\scriptscriptstyle{1}$};
   \draw (v1) node[above,xshift=.2cm,yshift=-.1cm] {$\scriptscriptstyle{3}$} -- (v2) node[above,xshift=-.2cm,yshift=-.1cm] {$\scriptscriptstyle{3}$};
   \draw (v1) node[below,yshift=-.2cm] {$\scriptscriptstyle{2}$} to[out=-90,in=-90]  (v2) node[below,yshift=-.2cm] {$\scriptscriptstyle{2}$};
  \filldraw[fill=black] (v1) circle (0.06);
  \filldraw[fill=black] (v2) circle (0.06);
  \end{tikzpicture}
  \newline
  
  \includegraphics[scale=.23]{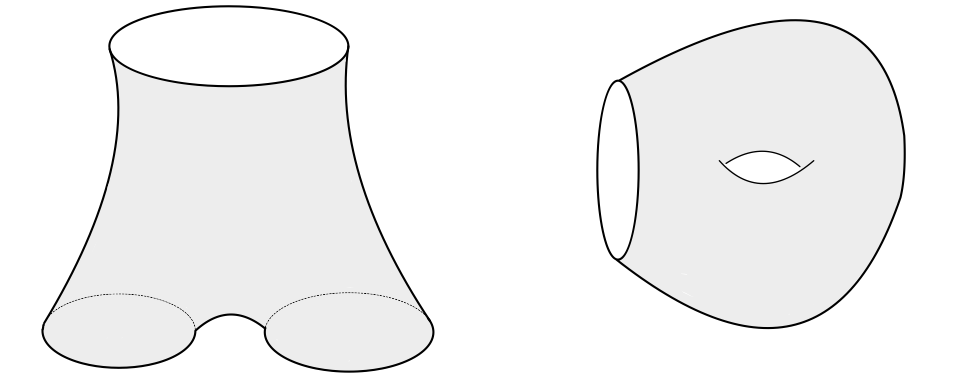}
    \caption{Two ribbon graphs (half-edges ordered by $1<2<3$) and their associated surfaces.}
    \label{fig:fat_graphs}
\end{figure}

\subsection{The Lie graph complex $\lie_N$} \label{ss:gc_lie}

Here the story is a little more complicated than in the previous two cases. One considers graphs with vertices decorated by elements of a certain Lie algebra.
We sketch the construction of these so-called $\mathrm{Lie}$-graphs and of the associated graph complex $\lie_N$, then introduce a different complex (of pairs of graphs) whose homology is closely related to that of $\lie_N$. 

\subsubsection{$\mathrm{Lie}$-decorated graphs}

Consider the free vector space generated by finite connected oriented graphs $G$ where each vertex $v\in V_G$ is decorated by a planar binary tree with $n=\val v$ labeled leaves (so that the set of its leaves is in one-to-one correspondence to $\inv{\iota}(v)$). 
Define $\lie_N$ as the quotient space by the usual relations, \eqref{eq:orient_i}, \eqref{eq:orient_ii}, and two additional relations, 
\begin{itemize}
    \item \label{as} antisymmetry: Reversing the (planar) orientation of a tree's vertex produces a minus sign, 
    \[
    \begin{tikzpicture}[scale=.4]
\coordinate (r) at (0,0);
  \coordinate  (v1) at (0,1);
   \coordinate  (v2) at (.5,1.5);
   \coordinate  (l1) at (-1,2);
   \coordinate  (l2) at (0,2);
   \coordinate  (l3) at (1,2);
  \filldraw[fill=black] (v1) circle (0.06);
   \filldraw[fill=black] (v2) circle (0.06);
      \draw (r) -- (v1);
      \draw (v1) -- (l1);
      \draw (v1) -- (v2);
      \draw (v2) -- (l2);
      \draw (v2) -- (l3);
  \end{tikzpicture}
  = -  \begin{tikzpicture}[scale=.4]
\coordinate (r) at (0,0);
  \coordinate  (v1) at (0,1);
   \coordinate  (v2) at (-.5,1.5);
   \coordinate  (l1) at (1,2);
   \coordinate  (l2) at (0,2);
   \coordinate  (l3) at (-1,2);
  \filldraw[fill=black] (v1) circle (0.06);
   \filldraw[fill=black] (v2) circle (0.06);
      \draw (r) -- (v1);
      \draw (v1) -- (l1);
      \draw (v1) -- (v2);
      \draw (v2) -- (l2);
      \draw (v2) -- (l3);
  \end{tikzpicture} .
  \]
    \item \label{IHX}Jacobi aka IHX identity:
    \[ 
 \begin{tikzpicture}[scale=.5]
\coordinate (u) at (0,0);
  \coordinate  (o) at (0,.5);
   \coordinate  (lo) at (-.5,1);
   \coordinate  (ro) at (.5,1);
   \coordinate  (lu) at (-.5,-.5);
   \coordinate  (ru) at (.5,-.5);
  \filldraw[fill=black] (o) circle (0.06);
   \filldraw[fill=black] (u) circle (0.06);
      \draw (u) -- (o);
      \draw (u) -- (lu);
      \draw (u) -- (ru);
      \draw (o) -- (ro);
      \draw (o) -- (lo);
  \end{tikzpicture}  
 \ - \
 \begin{tikzpicture}[scale=.5]
\coordinate (l) at (-.25,.25);
  \coordinate  (r) at (.25,.25);
   \coordinate  (lo) at (-.5,1);
   \coordinate  (ro) at (.5,1);
   \coordinate  (lu) at (-.5,-.5);
   \coordinate  (ru) at (.5,-.5);
  \filldraw[fill=black] (l) circle (0.06);
   \filldraw[fill=black] (r) circle (0.06);
      \draw (l) -- (r);
      \draw (l) -- (lu);
      \draw (l) -- (lo);
      \draw (r) -- (ro);
      \draw (r) -- (ru);
  \end{tikzpicture}  
 \ + \
   \begin{tikzpicture}[scale=.5]
\coordinate (l) at (-.17,.25);
  \coordinate  (r) at (.17,.25);
   \coordinate  (lo) at (-.5,1);
   \coordinate  (ro) at (.5,1);
   \coordinate  (lu) at (-.5,-.5);
   \coordinate  (ru) at (.5,-.5);
  \filldraw[fill=black] (l) circle (0.06);
   \filldraw[fill=black] (r) circle (0.06);
      \draw (l) -- (r);
      \draw (l) -- (lu);
      \draw (l) -- (ro);
      \draw (r) -- (lo);
      \draw (r) -- (ru);
  \end{tikzpicture}
 \ =0.
\]
\end{itemize}

The differential is the same as for the previous two complexes with the tree decoration of $G\sla e$ defined as follows. If $e=(v,w)=(h_1,h_2)$ with $v$ and $w$ decorated by trees $T_v$ and $T_w$, then the image of $e$ in $G/e$ is decorated by the tree $T_v*_{h_1,h_2}T_w$ obtained by identifying the leaf $h_1$ of $T_v$ with the leaf $h_2$ of $T_w$ (they become then an inner edge of $T_v*_{h_1,h_2}T_w$ which is thus again a binary tree with $\val v +\val w-2$ leaves). 

The resulting complex $(\lie_N,d)$ is called the \textbf{Lie graph complex} and its homology $H(\lie_N)$ is the \textbf{Lie graph homology}.

Denote the subcomplex generated by graphs of rank $g$ by $\lie_N^{(g)}$. Kontsevich shows in \cite{ko1} (see also \cite{convogt} and note that both use a different grading---by number of vertices)
\begin{thm}\label{thm:Lie_and_Out}
  $H_\bullet(\lie_N^{(g)}) \cong H^{(3-N)g-3-\bullet}(\mathrm{Out}(F_g);\QQ_N')$ where $\QQ_N'$ is the trivial representation if $N$ is odd, and the $\det$-representation\footnote{Let $\alpha$ denote the map $\mathrm{Out}(F_g) \to \mathrm{GL}_g(\ZZ)$. Then $\mathrm{Out}(F_g)$ acts on $\QQ$ via $x.q= \det\alpha(x)\cdot q$.} if $N$ is even. 
\end{thm}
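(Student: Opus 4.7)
The plan is to route through the forested graph complex already alluded to in \cref{sss:forested_graphs} and the spine of Outer space \cite{cv,hv}. Concretely, I would prove the theorem in four stages: rewrite $\lie_N^{(g)}$ as a forested graph complex $\gf_N^{(g)}$, identify the latter with equivariant cellular chains on the spine $S_g$ of Outer space $CV_g$, invoke contractibility of $CV_g$ to convert the equivariant homology into group homology of $\mathrm{Out}(F_g)$, and finally dualise to obtain cohomology in the complementary degree.

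First, I would exhibit a quasi-isomorphism $\lie_N^{(g)} \to \gf_N^{(g)}$, where generators of $\gf_N^{(g)}$ are pairs $(G,\Phi)$ with $G$ a connected trivalent graph of rank $g$ and $\Phi\subset E_G$ a spanning forest, oriented and signed as in \cref{sss:forested_graphs}. The map sends $(G,\Phi)$ to $G/\Phi$ with vertex decorations given by the connected components of $\Phi$ interpreted as planar binary trees; antisymmetry at a vertex of the decorated graph corresponds to reordering the two half-edges at a forest edge, and the IHX relation matches the three ways of expanding a 4-valent vertex of $G/\Phi$ into two trivalent vertices joined by a forest edge. This reduces the problem to a complex whose generators are purely topological, without algebraic decorations.

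Second, cells of $S_g$ (as a subcomplex of the barycentric subdivision of $CV_g$) are parametrised by equivalence classes of marked pairs $(G,\Phi)$, and the natural cellular orientation of the cell of $(G,\Phi)$ corresponds precisely to the data $\eta$ on $G$ once the parity of $N$ is fixed: for $N$ odd one uses the trivial orientation system, while for $N$ even one tensors with the local system that records the sign of the action of $\mathrm{Aut}(G)$ on $\det \QQ^{E_G}\otimes \det H_1(G)$, which is exactly the pullback of the $\det$-representation of $\mathrm{Out}(F_g)\to \mathrm{GL}_g(\ZZ)$. Taking $\mathrm{Out}(F_g)$-coinvariants of cellular chains on $S_g$ with these coefficients reproduces $\gf_N^{(g)}$ on the nose, modulo a uniform degree shift: a trivalent $G$ of rank $g$ has $e_G=3g-3$ and $v_G=2g-2$, so $\deg_N G = (3-N)g-3$, matching the top dimension $2g-3$ of $S_g$ when $N=0$ and shifting consistently in $N$.

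Third, I would combine (i) contractibility of $CV_g$ and of its equivariant deformation retract $S_g$, (ii) cocompactness and properness of the $\mathrm{Out}(F_g)$-action with finite stabilisers, so that rationally $S_g/\mathrm{Out}(F_g)$ is a model for $B\mathrm{Out}(F_g)$, and (iii) Poincaré--Lefschetz duality for the $(2g-3)$-dimensional orbifold $S_g/\mathrm{Out}(F_g)$ with $\QQ$ coefficients, which turns the Borel--Moore homology of the chain complex $\gf_N^{(g)}$ into cohomology in complementary degree with coefficients twisted by the orientation sheaf. Combining the dimensional accounting gives the shift $(3-N)g-3-\bullet$ as claimed.

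The main obstacle, and the only step that requires genuine care rather than bookkeeping, is matching orientation data across the three incarnations $\lie_N$, $\gf_N$, and cellular chains on $S_g$, and identifying the resulting local system on $S_g/\mathrm{Out}(F_g)$ with $\QQ_N'$. The rest follows from contractibility of $CV_g$, standard orbifold duality, and the degree calculation above; none of those steps depends on $N$ beyond the parity that is absorbed into the coefficient system.
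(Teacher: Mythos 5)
Two remarks before the substance: the paper itself does not prove this theorem but imports it from Kontsevich and Conant--Vogtmann \cite{ko1,convogt}, so what you are attempting is a reconstruction of that argument; and your ingredients are largely the right ones (blow-down between Lie-decorated graphs and forested graphs with the AS/IHX matching, the spine, contractibility, finite stabilisers and rational coefficients, and the parity-dependent twist by $\det\homgra$, which is indeed the pullback of the $\det$-representation). This is consistent with \cref{sss:forested_graphs} and \cref{ss:lie_cellular}.

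There is, however, a genuine gap in how you produce the complementary degree $(3-N)g-3-\bullet$. The complex isomorphic to $\lie_N^{(g)}$ is the \emph{first} forested variant of \cref{sss:forested_graphs}: the trivalent graph is fixed and the differential \emph{adds} edges to the forest. Under the dictionary between pairs $(G,\Phi)$ and cubes of the spine this is the cellular \emph{co}boundary, so $\lie_N^{(g)}$ is (up to the orientation bookkeeping) the equivariant cellular cochain complex of the contractible spine, and the degree shift is pure regrading: for trivalent $G$ one has $e_{G/\Phi}+e_\Phi=3g-3$, so a Lie graph of degree $(3-N)g-3-k$ sits over a $k$-cube. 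Your own sanity check reveals the problem: a trivalent Lie graph has top degree $(3-N)g-3$ but corresponds to a \emph{vertex} of $S_g$ (empty forest), while roses correspond to the top-dimensional cubes; this reversal is exactly the chain/cochain discrepancy you then try to repair by duality. Concretely, your step 1 as stated (``a quasi-isomorphism $\lie_N^{(g)}\to\gf_N^{(g)}$'') is false for the paper's $\gf_N$, whose differential collapses and deletes forest edges and whose generators are not trivalent: that complex is the \emph{chain} complex of the spine and is related to $\lie_N$ only by dualisation and regrading (\cref{cor:for_gc_and_lie_gc}). Having landed in group homology, you flip the degree by Poincar\'e--Lefschetz duality on $S_g/\mathrm{Out}(F_g)$; this step does not exist: the quotient is a $(2g-3)$-dimensional cube complex which is not a (rational homology) manifold, and $\mathrm{Out}(F_g)$ is not a rational duality group in this sense---already for $g=2$, $H_0(\mathrm{Out}(F_2);\QQ)=\QQ$ while $H^{2g-3}=H^1(\mathrm{GL}_2(\ZZ);M)$ vanishes rationally both for $M=\QQ$ and $M=\det$. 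Once you delete ``chains plus duality'' and instead identify $\lie_N^{(g)}$ with the equivariant \emph{cochains} of the contractible spine with coefficients $\QQ_N'$ (the delicate orientation matching you flagged, done as in Conant--Vogtmann), the remaining steps of your outline assemble into the standard proof with no duality input.
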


The situation is similar to the commutative case; apart from a few explicit examples not much is known about the structure of Lie graph homology. Morita \cite{morita} constructed an infinite family of cycles $(m_k)_{k \geq 1}$ which are conjectured to represent non-trivial classes $[m_k] \in H_{4k}(\mathrm{Out}(F_{2k+2});\QQ)$. Euler characteristic computations (see e.g.\ \cite{Borinsky_Vogtmann:2023} and references therein) show that there exist more classes, also in odd degrees (of which none is known to date).

\subsubsection{Forested graphs} \label{sss:forested_graphs}

There are two slightly different complexes of ``forested graphs" in the literature, both related to the rational (co-)homology of $\mathrm{Out}(F_n)$: The one introduced in \cite{convogt} (see also \cite{ConantVogtmann:Morita1,Vogtmann:Intro_gc}) is isomorphic to $\lie_N$ and computes thus the group \emph{co}homology of $\mathrm{Out}(F_n)$ (with a degree shift and the representation depending on $N$). The idea is to ``blow up" a $\mathrm{Lie}$-graph $G$ by inserting all the decorating trees $T_v$ at $v\in V_G$ and identifying their leaves with the corresponding half-edges of $G$. The result is a pair $(G',F)$ where $G'$ is the blown up graph and $F \subset G'$ is the forest that consists of all the inserted trees $T_v$. One checks that an odd orientation of the $\mathrm{Lie}$-graph $G$ is equivalent to ordering the edges of $F$. The forested graph complex is the quotient of the free vector space spanned by these forested graphs, subject to relations 
\begin{align}
  \label{eq:orient_pair_i} & (G,F,-\eta_F)= -(G,F,\eta_F), \tag{$i'$} \\ 
 \label{eq:orient_pair_ii} & (G,F,\eta_F)= (G',F', \varphi^* \eta_F) \text{ for isomorphisms } \varphi \colon G' \overset{\sim}{\to} G \text{ with } \varphi(F')=F, \tag{$ii'$}
\end{align}
plus a variant of the \hyperref[IHX]{IHX identity}. The differential adds edges to $F$ in all possible ways such that $F\cup e$ is still a forest in $G'$. 

The same construction works for the even case: The above described complex of forested graphs $(G,F)$ with \textit{odd} orientations, that is, using elements of $\det \QQ^{E_F}\otimes \det \homgra$, is isomorphic to $\lie_N$ for $N$ \textit{even}\footnote{This opposition of parity might suprise at first, but arises from the simple fact that for any finite dimensional $k$-vector space $V$ we have $\det V\otimes \det V\cong \det V\otimes (\det V)^\vee \cong k$ (because $\det V$ is 1-dimensional).}.

\begin{rem}
    In \cite{convogt} it is shown how also the (odd) associative graph complex relates to a modification of the above forested graph complex (forgetting the IHX-relation and restricting to the ``surface subcomplexes" described at the end of \cref{ss:gc_ass}).
\end{rem}

The second variant of a complex of forested graphs is slightly simpler. It was introduced in \cite{ConantVogtmann:Morita2} (based on \cite{cv,hv}). In geometric terms it arises from a cubical subdivision of the moduli space of graphs (\cref{ss:cubical_decomposition}) which is a rational classifying space for $\mathrm{Out}(F_n)$. It computes thus the rational \emph{homology} of $\mathrm{Out}(F_n)$ (with coefficients depending on the parity of $N$). From now on we work with this \textbf{forested graph complex} and denote it by $\gf_N$.

To define it we consider pairs $(G,F)$ where $G$ is a finite connected graph with all vertices at least trivalent and $F\subset G$ a forest in $G$. Here it will be convenient to think of a subgraph $\gamma \subset G$ as a (possibly disconnected) graph $(V(\gamma),E(\gamma))$ with $V(\gamma)=V(G)$ and $E(\gamma)\subset E(G)$. This means the forest $F$ is specified by a subset of edges in $E_G$; it automatically contains all the vertices of $G$. We define the \textbf{degree} of a pair $(G,F)$ by
 \[
 \deg (G,F) =e_F -Nh_G.
 \]

Let $\gf_N$ be the rational vector space spanned by linear combinations of triples $(G,F,\eta_F)$ where $F$ is a forest in $G$, and $\eta_F \in \det \QQ^{E_F}\otimes \det \homgra$ an orientation, subject to the relations \eqref{eq:orient_pair_i} and \eqref{eq:orient_pair_ii}.
Define a map $D \colon \gf_N \to \gf_N$ by $D=d-\delta$ with
\be \label{eq:cube_diffential}
\begin{split}
d[G,F,\eta_F] & = \sum_{e \in E(F)} [G\sla e,F\sla e,\eta_{F\sla e}] , \\
\delta[G,F,\eta_F] & = \sum_{e \in E(F)} [G,F\setminus e,\eta_{F \setminus e}],
\end{split}
\ee
where $\eta_{F\sla e}=\eta_{F \setminus e}$ are defined as above in \cref{eq:orient_collapse_even}. 

Let $\gf_N^{(g)}$ denote the subcomplex spanned by elements $[G,F,\eta_F]$ with $h_G=g$. We will see in \cref{s:MGandGC} that 
\[
H_\bullet(\gf_N^{(g)}) \cong H_{\bullet+Ng}(\mathrm{Out}(F_g),\QQ_N), \quad \QQ_N=\begin{cases}
    \QQ &  N \text{ even,} \\
    \det & N \text{ odd.}
\end{cases}
\]
Note the convention on $\QQ_N$ is opposite to the one in \cref{thm:Lie_and_Out}. Dualising and using \cref{thm:Lie_and_Out} we find\footnote{A proof on the level of (co)chains, using the other forested graph complex mentioned in the beginning of this section, for $N$ odd, can be found in \cite{Vogtmann:Intro_gc}.} the following relation. 
\begin{cor} \label{cor:for_gc_and_lie_gc}
The (dual of) forested graph homology is isomorphic to Lie graph homology, $H_\bullet(\gf_N^{(g)})^\vee \cong   H^{\bullet+Ng}(\mathrm{Out}(F_g),\QQ_N) \cong H_{(3-2N)g-3-\bullet}(\lie_N^{(g)})$. 
\end{cor}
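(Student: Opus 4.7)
The plan is to derive the corollary as a formal consequence of three inputs: the identification $H_\bullet(\gf_N^{(g)}) \cong H_{\bullet+Ng}(\mathrm{Out}(F_g), \QQ_N)$ stated just before the corollary (and proved in Section \ref{s:MGandGC} via the cubical decomposition of the moduli space of graphs), Kontsevich's Theorem \ref{thm:Lie_and_Out}, and rational universal coefficients.

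First I would dualise the forested identification. Over $\QQ$, for a group $G$ of virtual finite type and $M$ a finite-dimensional representation, universal coefficients give $H_n(G; M)^\vee \cong H^n(G; M^\vee)$. The representation $\QQ_N$ is one-dimensional and its determinant character takes values in $\{\pm 1\}$, so $\QQ_N \cong \QQ_N^\vee$ canonically. Dualising the stated forested-graph iso therefore yields
$$H_\bullet(\gf_N^{(g)})^\vee \;\cong\; H^{\bullet+Ng}(\mathrm{Out}(F_g); \QQ_N),$$
which is the first iso in the corollary.

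For the second iso I would substitute $k = (3-2N)g - 3 - \bullet$ into \cref{thm:Lie_and_Out}. A direct calculation gives $(3-N)g - 3 - k = Ng + \bullet$, so the theorem produces
$$H_{(3-2N)g-3-\bullet}(\lie_N^{(g)}) \;\cong\; H^{\bullet + Ng}(\mathrm{Out}(F_g); \QQ_N').$$
Concatenating the two displayed isomorphisms finishes the claim, up to matching the two coefficient systems $\QQ_N$ and $\QQ_N'$.

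This last matching is the one point that needs actual work, and I expect it to be the main obstacle: by definition the conventions on the forested side and on the Lie side have opposite parity (the paragraph preceding the corollary makes this explicit). I would handle it by lifting the comparison to the chain level via the alternative tree-blow-up forested graph complex of \cite{convogt} introduced just above: the swap of even versus odd orientation conventions on a pair $(G,F)$ — formalised by the isomorphism $\det V \otimes \det V \cong \QQ$ of the footnote in \cref{sss:forested_graphs} — realises exactly the tensoring with $\det$ that relates $\QQ_N$ to $\QQ_N'$ on the $\mathrm{Out}(F_g)$-side. For $N$ odd this identification is carried out in \cite{Vogtmann:Intro_gc}; for $N$ even one argues analogously, or deduces it formally from the self-duality $\det \cong \det^\vee$ of the determinant character over $\QQ$.
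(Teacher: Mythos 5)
Your proposal is essentially the paper's own argument: the corollary is obtained there in one line by dualising the identification $H_\bullet(\gf_N^{(g)})\cong H_{\bullet+Ng}(\mathrm{Out}(F_g),\QQ_N)$ (established in \cref{ss:lie_cellular} via the cubical decomposition of the spine) and substituting degrees into \cref{thm:Lie_and_Out}---precisely your first two steps---with the chain-level comparison via the blown-up forested complex of \cref{sss:forested_graphs} relegated to a footnote citing \cite{Vogtmann:Intro_gc} for $N$ odd. The coefficient matching you flag is indeed the one delicate point, which the paper itself only addresses by remarking that $\QQ_N$ and $\QQ_N'$ are opposite conventions (so the statement in effect pairs $\gf_N$ with the Lie complex of the opposite orientation parity, exactly as it is used in \cref{ss:lie_cellular}); just note that your fallback of handling the even case ``formally from $\det\cong\det^\vee$'' does not work, since trivial and determinant coefficients cannot be identified on the $\mathrm{Out}(F_g)$ side---the parity swap must be implemented on orientations of the complexes, as in your main suggestion.
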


Lie graph homology is thus related to the cohomology of $\mathrm{Out}(F_g)$ and thereby also to the homology of its rational classifying space, a moduli space of (metric) graphs. 
This marks the first appearance of our geometric avatar. Below we will see that also the commutative graph complex can be realised using this moduli space. 
In order to make this precise and further investigate this connection, we now turn our attention towards spaces of metric graphs.

\section{Moduli spaces of graphs} \label{s:moduli_space_of_graphs}

In this section we define various moduli spaces of graphs and introduce some subspaces that will become useful later in \cref{s:MGandGC}.

\subsection{A space of graphs}\label{ss:space_of_graph}
The moduli space of graphs has a convenient definition as realization of a category of graphs (e.g.\ \cite{cgp}), or as quotient of Outer space by the action of $\mathrm{Out}(F_g)$ \cite{cv} (see \cref{rem:outer_space}). Here we follow the construction from \cite{BMV:Tropical_Torelli,Chan:Combinatorics_of_trop_Torelli} of the moduli space of \textit{tropical curves} which are certain weighted metric graphs: A \textbf{weighting} on $G$ is a map $w \colon V_G \to \NN$. The \textbf{genus} of a weighted graph is 
\[
g(G,w)= h_G + \sum_{v \in V_G} w(v) .
\]

Let $g\geq 2$. Define a category $\m G_g$ by

\begin{itemize}
    \item $\mathrm{ob}(\m G_g)$ is the set of isomorphism classes of weighted graphs of genus $g$ and for every $v\in V_G$ we have $2 w(v) + \val v > 2$. We henceforth call such graphs \textbf{stable}. Note that, if $w\equiv 0$, then this is the same as requiring every vertex to be at least trivalent. As a consequence, $\m G_g$ is finite.
    \item $\mathrm{hom}(\m G_g)$ consists of subgraph collapses or isomorphisms of weighted graphs. Collapsing a subgraph $\gamma \subset G$ has the following effect on the weighting: If $\gamma$ collapses to a vertex $v\in V_{G/\gamma}$, then its new weight is
    \[
    w(v)=h_\gamma + \sum_{ v'\in V_\gamma} w(v').
    \]
\end{itemize}

For each $G \in \m G_g$\footnote{We henceforth abbreviate $[(G,w)]$ by $G$ to keep the notation light.} let 
\[
\sigma(G) = \mb R_{\geq 0}^{E_G}= \set{x \colon E_G \to \mb R_{\geq 0} }       
\]
denote the space of metrics $x$ on $G$. Given a morphism $f \in \mathrm{hom}(\m G_g)$, define a map $\sigma(f) \colon \sigma\left(f(G)\right) \to \sigma(G) $ by
\[
x' \longmapsto x \ \text{ where }  \ x_e= \begin{cases} x'_{e'} & \text{ if } e=f(e'), \\
                             0 & \text{ if $f$ collapses $e$.} \end{cases}
\]
This defines a functor from $\m G_g$ to the category of topological spaces. The \textbf{moduli space of weighted genus $g$ metric graphs} ${\m {MG}_g}$ is defined as the colimit of this functor. 

Note that on each cell $\sigma(G)$ we have a (linear) group action of $\Aut(G)\subset \hom (\m G_g)$. As a consequence $\m {MG}_g$ is an orbifold. In fact, it is a \textit{generalized cone complex} \cite{BBCMMW:gen_cone_cplx}, a union of cones 
\[
\ti \sigma(G)= \sigma(G) / \mathrm{Aut}(G) , 
\]
glued together by identifying $\ti \sigma(G/\gamma)$ with $\set{x_e=0 \mid e\in E_\gamma} \subset  \ti \sigma(G)$. For instance, if $G$ is the theta graph (\cref{fig:theta}), then the corresponding cone in $\m {MG}_2$ is the quotient of $\mb R_{\geq 0}^3$ by the action of $S_3$ permuting the coordinates $x_1,x_2,x_3$ (see \cref{fig:cells_and_face_relation}). 

Alternatively, one can start with the collection of cones $\sigma(G)$, glue them together along common boundaries $\sigma(G/\gamma)\cong\set{x_e=0 \mid e\in E_\gamma} \subset \sigma(G)$, and then take the quotient with respect to the equivalence relation 
\[
x \sim x' \quad  \Longleftrightarrow \quad  \exists \ \varphi \in \Aut(G) : x=x'\circ \varphi.  
\]
This point of view has the advantage that we can define objects on $\m {MG}_g$ by using the ``intermediate complex" formed by the cells $\sigma(G)$ and requiring each piece of of data to be equivariant with respect to the ``local action" of $\Aut(G)$. This is for instance done in \cite{brown21} to set up a de Rham theory on a certain subspace $MG_g$ of $\m {MG}_g$, or in \cite{mb2,mb-dk} to study Feynman integrals on this space.

It is customary to normalize the metrics on graphs ($\m {MG}_g$ is contractible, thus quite boring). For this define 
\[
MG_g=  \m {MG}_g / \RR 
\]
where $\RR$ acts by rescaling the metrics. In other words, $MG_g$ is the link of the vertex $*_g$ that represents the graph with no edges and a single vertex of weight $g$. 

To identify $MG_g$ with a subset of $\m {MG}_g$ define a map $\mathrm{vol} \colon \m {MG}_g \to \RR_{\geq 
0}$ that measures the \emph{volume} of a metric graph, $\mathrm{vol}(x)= \sum_{e\in E_G} x_e$. Then $MG_g \cong  \inv{ \mathrm{vol} } (1)$. 
This turns the cones $\sigma(G)$ into simplices
\[
\Delta_G = \set{ \ [x] : x \in \sigma(G) } \subset \PP^G=\PP(\RR^{E_G}). 
\] 
From now on we work with this \textbf{moduli space of normalized weighted genus $g$ metric graphs}, and henceforth refer to $MG_g$ simply as the \emph{moduli space of graphs}.

\begin{figure}[h]
 \begin{tikzpicture}[scale=1]
 \coordinate (b) at (0,-1.8);
 \filldraw[white] (b) circle (0.01);
\coordinate (l) at (-1,0);
  \coordinate  (r) at (1,1); 
  \coordinate  (o) at (0,1.7);
  \coordinate (c) at (0,0); 
      \coordinate (h) at (1,2);
     \coordinate (hd) at (-.5,1.5);
      \coordinate (s1) at (0.6,0.6); 
       \coordinate (s2) at (0.4,.8); 
        \coordinate (s3) at (-.25,.75); 
  \fill[gray!23] (c) -- (hd) -- (h) -- (r) -- (c);
  \fill[red!18] (s1) -- (s2) -- (s3);
  \draw[thin,red!40] (s1) -- (s2) --(s3) -- (s1);
     \draw[dashed] (c) -- (h);
     \draw[dashed] (c) -- (hd) node[left] {$\scriptstyle{x_1=x_2=x_3}$};
     \draw[dashed, black!50] (hd) -- (r);
   \draw (c) -- (l) node[left] {$x_2$};
   \draw (c) -- (r) node[right] {$x_1$};
   \draw (c) -- (o) node[above] {$x_3$};
  %\filldraw[fill=gray!30] (c) circle (0.0666);
  %%% graph
  \coordinate (v1) at (-.5,-1);
  \coordinate (v2) at (.5,-1);
   \draw (v1) to[out=90,in=90] node[above,yshift=-.05cm] {$\scriptstyle{1}$} (v2);
   \draw (v1) -- node[above,yshift=-.1cm] {$\scriptstyle{2}$} (v2);
   \draw (v1) to[out=-90,in=-90] node[above,yshift=-.1cm] {$\scriptstyle{3}$} (v2);
  \filldraw[fill=black] (v1) circle (0.06);
  \filldraw[fill=black] (v2) circle (0.06);
  \end{tikzpicture} 
  %%%%%%%%%%%%%%%
\qquad \qquad
  %%%%%%%%%%%%%%%
   \begin{tikzpicture}[scale=1]
  \coordinate  (r) at (1,1); 
   \coordinate  (o) at (0,1.6);
  \coordinate (c) at (0,0); 
  \coordinate (h) at (1,2);
   \coordinate (s1) at (0.6,0.6); 
   \coordinate (s2) at (0.4,.8); 
  \fill[gray!23] (c) -- (r) -- (h) -- (c);
   \draw[red!33,thick] (s1) -- (s2);
   \draw[dashed] (c) -- (h);
   \draw (c) -- (r) node[right] {$x_1$};
   \draw (c) -- (o) node[above] {$x_3$};
      %%% graph
  \coordinate (v1) at (0,-1);
  \draw[scale=2.5] (v1) to[in=-140,out=130,loop] node[midway,xshift=-.1cm] {$\scriptstyle{1}$} (v1);
     \draw[scale=2.5] (v1) to[in=50,out=-40,loop] node[midway,xshift=.1cm] {$\scriptstyle{3}$} (v1);
  \filldraw[fill=black] (v1) circle (0.06);
  \end{tikzpicture} 
    %%%%%%%%%%%%%%%
\qquad \qquad
  %%%%%%%%%%%%%%%
   \begin{tikzpicture}[scale=1]
\coordinate (rr) at (1,0);
  \coordinate  (r) at (1,1); 
   \coordinate  (o) at (0,1.6);
  \coordinate (c) at (0,0); 
    \coordinate (h) at (1,2);
     \coordinate (ho) at (1.95,2);
      \coordinate (hu) at (1.95,1);
       \coordinate (s1) at (0.6,0.6); 
       \coordinate (s2) at (0.4,.8); 
        \coordinate (s3) at (.666,0); 
  \fill[gray!23] (c) -- (rr) -- (hu) -- (ho) -- (h) -- (c);
   \fill[red!18] (s1) -- (s2) -- (s3);
   \draw[thin,red!40] (s1) -- (s2) --(s3) -- (s1);
     \draw[dashed] (c) -- (h);
     \draw[thin,dashed,black!66] (rr) -- (ho);
   \draw (c) -- (rr) node[right] {$x_2$};
   \draw (c) -- (r) node[above right] {$x_1$};
   \draw (c) -- (o) node[above] {$x_3$};  %\filldraw[fill=gray!30] (c) circle (0.0666);
   %%% graph
  \coordinate (v1) at (-.1,-1);
  \coordinate (v2) at (.4,-1);
   \draw[scale=2.5] (v1) to[in=-140,out=130,loop] node[midway,xshift=-.1cm] {$\scriptstyle{1}$} (v1);
   \draw (v1) -- node[above,yshift=-.1cm] {$\scriptstyle{2}$} (v2);
   \draw[scale=2.5] (v2) to[in=50,out=-40,loop] node[midway,xshift=.1cm] {$\scriptstyle{3}$} (v2);
  \filldraw[fill=black] (v1) circle (0.06);
  \filldraw[fill=black] (v2) circle (0.06);
  \end{tikzpicture} 
   \caption{Two neighboring cones $\sigma(G)$ and their common face. The quotient spaces $\ti \sigma(G)= \sigma(G) / \Aut(G)$ are colored in grey, the cells $\Delta_G$ in red. Note that all cones in $\m{MG}_g$ contain the origin. It corresponds to the (metric) graph with no edges and a single vertex of weight $g$.}\label{fig:cells_and_face_relation}
\end{figure}
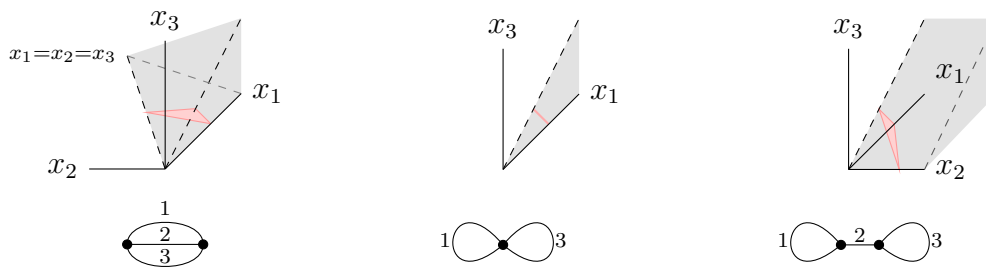

\begin{prop} \label{prop:dim_mod_space}
    The dimension of $MG_g$ is $3g-4$. 
\end{prop}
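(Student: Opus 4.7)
The plan is to identify the dimension of $MG_g$ as the maximal dimension of its cells $\Delta_G$, and then maximise this over all stable weighted graphs of genus $g$.

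First I would recall that $MG_g$ is built by gluing the cells $\Delta_G \subset \PP^G$, which have dimension $e_G - 1$ (one less than $\dim \sigma(G)$ because we passed to the link of $*_g$, i.e.\ normalised by volume). Hence $\dim MG_g = \max_{G} (e_G - 1)$, where $G$ ranges over stable weighted graphs of genus $g$. Combining $g = h_G + \sum_{v \in V_G} w(v)$ with $h_G = e_G - v_G + 1$ gives
\[
e_G = g - 1 + v_G - \sum_{v \in V_G} w(v).
\]
Thus maximising $e_G$ amounts to maximising $v_G - \sum_v w(v)$ under the stability constraint $2w(v) + \val v > 2$ for all $v$.

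Next I would argue that the maximum is attained by a graph with $w \equiv 0$ whose vertices are all trivalent. Indeed, any vertex $v$ with $w(v) \geq 1$ contributes $-w(v)$ via the weighting and at most $+1$ via $v_G$, which is suboptimal since a weight-$0$ trivalent vertex contributes $+1$ and no subtraction. So the maximum is attained on the locus $w \equiv 0$, where stability forces $\val v \geq 3$. Counting half-edges, $2e_G = \sum_v \val v \geq 3 v_G$, so $v_G \leq \tfrac{2}{3} e_G$; combined with $v_G = e_G - g + 1$ this yields $e_G \leq 3g - 3$, with equality iff every vertex is exactly trivalent. The bound is attained: for $g=2$ the theta graph has $e = 3 = 3g-3$, and for larger $g$ one builds a trivalent graph inductively, e.g.\ by repeatedly subdividing an edge and attaching a new loop, which preserves trivalence and increases $g$ by one.

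The only mild subtlety is verifying the upgrade from ``each cell has dimension $\leq 3g-4$'' to ``$MG_g$ has dimension $3g-4$'': one needs to know that the interior of a top-dimensional cell is genuinely $(3g-4)$-dimensional in $MG_g$, which follows because the $\Aut(G)$-action on $\sigma(G)$ is by linear automorphisms and thus does not reduce dimension, and the identifications along faces only affect the boundary of $\Delta_G$. I do not expect any genuine obstacle here; the whole argument is essentially a bookkeeping exercise with the stability condition and Euler's formula.
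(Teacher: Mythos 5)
Your argument is essentially the paper's: maximise $e_G-1$ over stable weighted graphs via Euler's formula and the stability condition, concluding that the maximum $e_G=3g-3$ is attained exactly by weight-zero trivalent graphs (the paper phrases the ``weights don't help'' step via the antenna example, yours via the net contribution of a weighted vertex to $v_G-\sum_v w(v)$; both are fine). One small slip in your existence step: subdividing an edge and attaching a tadpole at the new vertex makes that vertex $4$-valent and gives $e_G=3g-4<3g-3$, so replace it by, e.g., subdividing two edges and joining the two new bivalent vertices by a fresh edge, which keeps the graph trivalent and raises the genus by one.
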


\begin{proof}
  Recall the stability condition: $2w(v)+\val{v} >2$ must hold for every vertex $v\in V_G$. Suppose first that $G$ is of weight zero. From the Euler characteristic of $G$ we get $e_G=v_G+h_G-1$, hence $e_G$ is maximal if all vertices are trivalent. For such a graph we have $2e_G=3v_G$ and therefore $e_G=3h_G-3$. This can not be improved by adding weights: If we attach an ``antenna" whose end has weight 1 to an edge of a graph $G'$ with $h_{G'}=g-1$, the result $G''$ has $e_{G''}=e_{G'}+2=3(g-1)-1<3g-3$. Therefore, $\max\set{e_G \mid G \in \mathrm{ob}(\m G_g) }=3g-3=\dim \m{MG}_g $, hence $\dim MG_g=3g-4$.
\end{proof}

\begin{rem}
    All of the above can be done for graphs with legs \cite{chkv,cgp2} or with colored edges \cite{mb-mm}. The only essential difference is that graph isomorphisms must respect the additional leg or color structure, everything else applies verbatim (only internal edges have lengths). The same holds for the case of ribbon graphs; see \cref{ss:ass_cellular}. 
\end{rem}

 The notion of building a space out of quotient cells can be formalized in many different ways. In \cite{cgp} this is done using the language of \textit{symmetric semi-simplicial complexes}. More generally, these are instances of (links of) \textit{generalized cone complexes} or \textit{symmetric CW-complexes}; see \cite[\S 2]{BBCMMW:gen_cone_cplx} and references therein. We return to this point in \cref{s:MGandGC}.

\subsection{Subspaces of $MG_g$}

We introduce some subspaces of $MG_g$ that will be relevant in the following sections. Define
\begin{itemize}
    \item $MG_g^k$ as the subspace of (metrics on) graphs with total weight $k$,
    \item $MG_g^{\geq k}$ as the \emph{subcomplex} of graphs with total weight at least $k$.
\end{itemize}

Most important for us is the case $k=0$. Since collapsing a tadpole edge increases the total weight of a graph by 1, $MG_g^0$ is not a subcomplex of $MG_g$. It is (the quotient of) a cell complex with some of its cells deleted: If $G$ has no weights, then a face 
\[
\Delta_{G/\gamma}\cong\set{x_e=0 \mid e \in E_\gamma\subset E_G}\subset \Delta_G
\]
 lies in $MG_g^0$ if and only if $h_\gamma=0$, that is, if $\gamma$ is a forest in $G$. 
If $h_\gamma>0$, then $\Delta_{G/\gamma}$ lies in the complement $MG_g\setminus MG_g^0=MG_g^{\geq1}$. 

\begin{rem} \label{rem:outer_space}
  Classically, $MG_g^0$ arises as the quotient of Outer space $O_g$ by the group action of $\mathrm{Out}(F_g)$ \cite{cv}. Here $O_g$ is a moduli space of \textit{marked} metric graphs where a marking is a homotopy equivalence $m\colon G \to R_g$ with $R_g$ a vertex with $g$ tadpoles attached. Two markings $m\colon G \to R_g$ and $m'\colon G' \to R_g$ represent the same element of $O_g$ if there is an isomorphism $\varphi\colon G \to G'$ such that $m$ is homotopic to $m'\circ \varphi$. Since $\pi_1(R_g)$ is isomorphic to the free group $F_g$, the group $\mathrm{Out}(F_g)$ of outer automorphisms of $F_g$ acts on $O_g$ by changing the markings. The quotient space is $MG_g^0$. In other words, Outer space is the total space of a fibration over $MG_g^0$, where the fiber over $x\in \Delta_G$ consists of all (homotopy classes of) markings of $G$ (with respect to the metric $x$).  
\end{rem}

Since $MG_g^0$ is not a complex, it is often useful to replace it by better behaved spaces. Firstly, there is the obvious (semi-simplicial) completion, obtained by adding all the missing faces; this is just $MG_g$. Secondly, we can truncate small ``neigborhoods of infinity" in each cell $\Delta_G$ to obtain polytopes $J_G$ that assemble to a compact space $\ti MG^0_g$, homotopy equivalent to $MG_g^0$. This is equivalent (homeomorphic) to blowing-up $MG^0_g$ along its faces at infinity in increasing order of dimension. For the details of this construction we refer to \cite{v-bord}; see also \cite{mb2,brown21,brown:bordifications}.  

One can also retract $MG_g^0$ onto its \textbf{spine}, a subcomplex $S_g$ of the barycentric subdivision of $MG_g$. As an abstract complex $S_g$ is defined as the order complex of the poset 
\[
\left( \mathrm{ob}(\m G_g), \leq \right), \quad G \leq G' \text{ if there is a forest } F \subset G' \text{ with } G=G'/F.
\footnote{Recall that we are working with isomorphism classes. This relation is therefore required to hold only up to isomorphism.}
\]
 The geometric realization of this complex can be embedded in $MG_g$; see \cref{fig:sunrise_cell,fig:cubes_in_simplex}, and \cref{ss:HE_MGandSpine}. 
 Then there is a deformation retract $r: MG_g^0 \to S_g$ defined by collapsing all the cells that have vertices in $MG_g^{\geq 1}$ onto their faces in $S_g$ \cite{cv}. This spine of $MG_g^0$ is in fact a cube complex as we will explain in the next section.

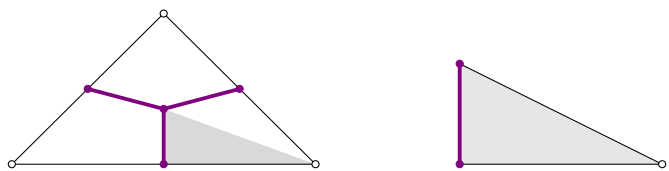
\begin{figure}[h]
 \begin{tikzpicture}[scale=.666]
\coordinate (l) at (-3,0);
  \coordinate  (r) at (3,0); 
   \coordinate  (o) at (0,3);
  \coordinate (lo) at (-1.5,1.5); 
   \coordinate (lr) at (0,0); 
  \coordinate (ro) at (1.5,1.5); 
  \coordinate (c) at (0,1.1); 
  \fill[gray!30] (c) -- (r) -- (lr) -- (c);
   \draw (l) -- (r);
   \draw (l) -- (o) ;
   \draw (r) -- (o) ;
     \draw[line width=0.5mm, violet] (lo) -- (c);
   \draw[line width=0.5mm, violet] (lr) -- (c);
   \draw[line width=0.5mm, violet] (ro) -- (c);
  \filldraw[violet] (lo) circle (0.07);
  \filldraw[violet] (lr) circle (0.07);
 \filldraw[violet] (ro) circle (0.07);
  \filldraw[violet] (c) circle (0.07);
 \filldraw[fill=white] (l) circle (0.07);
 \filldraw[fill=white] (r) circle (0.07);
 \filldraw[fill=white] (o) circle (0.07);
  \end{tikzpicture} 
  %%%%%%%%%%%%%%%
\qquad \qquad
  %%%%%%%%%%%%%%%
   \begin{tikzpicture}[scale=.666]
\coordinate (l) at (-2,0);
  \coordinate  (r) at (2,0); 
   \coordinate  (o) at (-2,2);
   \fill[gray!20] (l) -- (o) -- (r) -- (l);
   \draw (l) -- (r);
   \draw (l) -- (o) ;
   \draw (r) -- (o) ;
     \draw[line width=0.5mm, violet] (l) -- (o);
  \filldraw[violet] (l) circle (0.07);
  \filldraw[violet] (o) circle (0.07);
 \filldraw[fill=white] (r) circle (0.07);
  \end{tikzpicture} 
   \caption{On the left the simplex $\Delta_{B_3}$ and its spine for the 3-banana $B_3$ (aka sunrise or theta graph). On the right the quotient cell $\ti \Delta_{B_3}= \Delta_{B_3} / S_3$ and its spine.}\label{fig:sunrise_cell}
\end{figure}

\begin{cor} \label{cor:dim_subspaces}
    The dimension of $MG_g^0$ is $3g-4$, the dimension of $MG_g^{\geq1}$ is $3g-5$, and the dimension of the spine $S_g$ is $2g-3$.
\end{cor}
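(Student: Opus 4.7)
The plan is to compute each of the three dimensions by exhibiting an extremal cell and bounding the edge count using the Euler identity $e_G=v_G+h_G-1$ together with the stability constraint $2w(v)+\val v>2$. In all cases a cell $\Delta_G\subset MG_g$ has dimension $e_G-1$ after projectivizing, so the problem reduces to maximizing $e_G$ over the appropriate family of stable weighted graphs.

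For $MG_g^0$ nothing new is required: the proof of Proposition \ref{prop:dim_mod_space} already establishes $e_G\leq 3g-3$ with equality for trivalent weight-zero graphs, and these lie entirely in $MG_g^0$, so $\dim MG_g^0=3g-4$. For $MG_g^{\geq 1}$ I would maximize $e_G$ among graphs of total weight $w\geq 1$, for which $h_G=g-w\leq g-1$. Weight-zero vertices are trivalent and positive-weight vertices have valence at least $1$, so the handshake identity gives $2e_G\geq 3v_0+v_+$, where $v_0,v_+$ count weight-zero and positive-weight vertices. Combining this with the Euler identity $e_G=v_0+v_++h_G-1$ yields $e_G\leq 2v_++3(g-w)-3$, and using $v_+\leq w$ (positive weights are integers $\geq 1$) this simplifies to $e_G\leq 3g-w-3\leq 3g-4$. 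Equality is realized exactly by the construction from Proposition \ref{prop:dim_mod_space}: subdivide one edge of a trivalent weight-zero rank-$(g-1)$ graph and attach a weight-$1$ leaf, giving $e_G=3(g-1)-3+2=3g-4$ and hence $\dim MG_g^{\geq 1}=3g-5$.

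For the spine, a $k$-simplex corresponds to a chain $G_0<G_1<\cdots<G_k$ in the sub-poset of weight-zero stable graphs (which is closed under $\leq$ because forest collapse preserves total weight). Refining to cover relations, each step is a single-edge collapse and drops exactly one edge, so the maximum $k$ equals $e_{\max}-e_{\min}$ over weight-zero stable graphs. The minimum $e_G=g$ is realized by the rose $R_g$ with $g$ tadpoles at a single $2g$-valent vertex (stable for $g\geq 2$), while the maximum $e_G=3g-3$ is attained by any trivalent graph. Fixing a trivalent $G$ and a spanning tree $T\subset G$, the chain obtained by removing the edges of $T$ one at a time has length $|E(T)|=v_G-1=2g-3$; stability is preserved along the chain since collapsing a sub-forest of a trivalent graph yields a graph whose vertices still have valence at least $3$. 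This gives $\dim S_g=2g-3$.

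The main obstacle I expect is the valence arithmetic in the $MG_g^{\geq 1}$ case: higher-weight vertices relax the stability constraint, so one must check that trading loops for low-valence vertices never outperforms the single-antenna extremal example. The monotonicity $e_G\leq 3g-w-3$ in $w$, obtained by combining the handshake and Euler identities, is the key step forcing the optimum at the minimum allowable weight $w=1$.
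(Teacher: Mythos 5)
Your proposal is correct and takes essentially the same route as the paper, whose proof simply appeals to the extremal model from \cref{prop:dim_mod_space}: weight-zero trivalent graphs with $3g-3$ edges for $MG_g^0$, the weight-one ``antenna'' graph with $3g-4$ edges for $MG_g^{\geq 1}$, and spanning-tree (forest) collapses of length $2g-3$ for the spine. You merely make explicit the counting the paper leaves implicit (e.g.\ the bound $e_G\leq 3g-w-3$ via handshake plus Euler, and the chain-length argument in the order complex), and these details are sound.
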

    
\begin{proof}
    All statements follow from the model constructed in the proof of \cref{prop:dim_mod_space}. 
\end{proof}

\subsection{Subdivision into cubes}\label{ss:cubical_decomposition}

The moduli space of graphs can also be described as a union of (quotient) cubes. For this let $\beta(\Delta_G)$ denote the barycentric subdivision of a simplex $\Delta_G$. Its vertices are the points
\[
x \in \Delta_G : x_e=0  \text{ for all }e \in E_\gamma,  \ x_i=x_j  \text{ for all } i,j\notin E_\gamma,
\]
that is, the barycenters of $\Delta_{G/\gamma}$ for subgraphs $\gamma\subset G$ with at most $e_G-1$ edges. A simplex of $\beta (\Delta_G)$ can thus be described by a sequence of edge-collapses 
\[
 G' \longrightarrow G'/e_1 \longrightarrow G'/\set{e_1 \cup e_2} \longrightarrow \cdots \longrightarrow G'/\set{e_1 \cup \ldots \cup e_k}
\]
where $G'$ denotes a cograph $G/ \gamma$ with $e_\gamma < e_G$ and $\set{e_1,\ldots,e_k}\subset E_{G / \gamma}$. Since any order on $I=\set{e_1,\ldots,e_k}$ produces such a simplex, they can all be grouped together to form a $k$-dimensional cube. It follows that the interior $\overset{\circ}{\Delta}_G$ decomposes into a disjoint union of open cubes indexed by subgraphs $\gamma \subset G$ with $ e_\gamma < e_G$, and likewise for the boundary $\partial \Delta_G$.

Let $\cube \subset \Delta_G$ denote the cube 
\[
\cube = \set{ [x] : x_i=x_j \text{ for all }i,j \notin E_\gamma} \cong [0,1]^{e_\gamma}.
\]
Its facets are indexed by the edges $e \in E_\gamma$ and come in two types, either $\set{x_e=0}$, or $\set{x_e=x_i \mid i \notin E_\gamma}$. The first type is isomorphic to the cube $c(G/e,\gamma/e)$, the second to $c(G,\gamma \setminus e)$.

Note that an isomorphism $\varphi\colon G \overset{\sim}{\to} G'$ induces an isomorphism 
$
c(\varphi) \colon \cube \overset{\sim}{\to} c(G',\varphi(\gamma))$. Define
\[
\Aut(G,\gamma)= \set{ \varphi \in \Aut(G) \mid \varphi(E_\gamma)=E_\gamma }
\]
as the group of automorphisms of the pair $(G,\gamma)$ and denote by $\ti{c}(G,\gamma)$ the quotient $\cube/\Aut(G,\gamma)$.

Let $\m P_g$ be the category with 
\begin{itemize}
    \item $\mathrm{ob}(\m P_g)$ the set of isomorphism classes of pairs $(G,\gamma)$ where $G$ is stable, has genus $g$ and $\gamma \subset G$.
    \item $\mathrm{hom}(\m P_g)$ consists of isomorphisms $(G,\gamma)\mapsto (G',\gamma')$, edge collapses $(G,\gamma)\mapsto (G/e,\gamma/e)$, and edge deletions $(G,\gamma) \mapsto (G,\gamma-e)$.
\end{itemize}

Putting all of the above together it follows that the moduli space of graphs can be described as a union of quotient cubes 
\be \label{eq:MG_cube_decomposition}
MG_g = \Big( \bigcup_{(G,\gamma)\in \mathrm{ob}(\m P_g)} \ti{c}(G,\gamma)  \Big)/\sim 
\ee
with $\sim$ denoting the face relation that identifies for all $\eta \subset \gamma$
\begin{align*}
    \ti{c}(G/\eta,\gamma/\eta) & \ \text{ with } \ \set{x_e=0 \mid e\in E_\eta} \subset  \ti{c}(G,\gamma), \\
    \ti{c}(G,\gamma \setminus \eta) & \ \text{ with } \ \set{x_i=x_j \mid i\in E_\eta, j\notin E_\gamma} \subset  \ti{c}(G,\gamma).
\end{align*}

The discussion in \cref{ss:space_of_graph} applies verbatim; we can construct $MG_g$
by first taking the union of cubes $\cube$, glued together along faces 
$c(G/\eta,\gamma/\eta)$ and $c(G,\gamma \setminus \eta)$, and then take the quotient with respect to the ``local" action by the groups $\Aut(G,\gamma)$.

The spine $S_g$ is (homeomorphic to) the cubical subcomplex of pairs $(G,\gamma)$ where $\gamma$ is a forest in $G$.

\begin{example}
    Consider $\Delta_{B_3}$, depicted in \cref{fig:sunrise_cell}. It decomposes into 
    \begin{itemize}
        \item seven 0-dimensional cubes: $c(B_3,\varnothing)$ (center), $ c(B_3/e_i,\varnothing)$ (midpoints of facets), $ c(B_3/\set{e_i,e_j},\varnothing)$ (corners).
        \item nine 1-cubes: $ c(B_3,e_i)$ (violet) and $c(B_3/\set{e_i,e_j})$ (boundary).
        \item three 2-cubes: $c(B_3,\set{e_i,e_j})$.
    \end{itemize} 
    On the right in \cref{fig:sunrise_cell} is the quotient $\ti \Delta_{B_3}=\ti c(B_3,\set{e_i,e_j})$, a single 2-cube that is ``folded" along its  diagonal (because $\Aut(B_3,\set{e_i,e_j})$ acts by permuting the coordinates $x_i$ and $x_j$).
\end{example}

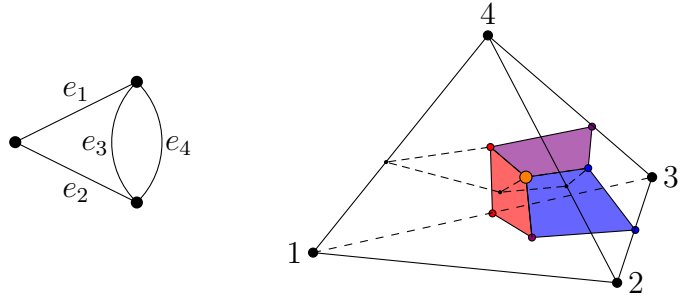
\begin{figure}[h]
  \begin{tikzpicture}[scale=.8]
  \coordinate (h) at (0,0) node {};
   \coordinate (v0) at (0,2.5);
   \coordinate  (v1) at (2,1.5);
   \coordinate (v2) at (2,3.5);
   \draw[] (v0) -- (v1) node [midway,below]{$\color{black}e_2$};
   \draw (v0) -- (v2) node [midway,above]{$e_1$};
   \draw (v2) to[out=-135,in=135] (v1) node [xshift=-.55cm,yshift=.75cm]{$e_3$};
   \draw[] (v2) to[out=-45,in=45] (v1) node [xshift=.55cm,yshift=.75cm]{$\color{black}e_4$}; 
   \fill[black] (v0) circle (.1cm);
   \fill[black] (v1) circle (.1cm); 
   \fill[black] (v2) circle (.1cm); 
  \end{tikzpicture}
  %%%%%%%%%%%%%%%%%%%%%%%%%%%%%%%%%%%%%%%%%%%%%%%%%%%
  \quad \quad 
  %%%%%%%%%%%%%%%%%%%%%%%%%%%%%%%%%%%%%%%%%%%%%%%%%%%%%%
\begin{tikzpicture}[scale=2]
\coordinate (v1) at (-1,0);
\coordinate  (v2) at (1,-.2); 
\coordinate  (v4) at (0.15,1.44); 
\coordinate  (v3) at (1.23,.5);
 \coordinate (c2) at (.44,.1); 
\coordinate (c3) at (0.1666,.7); 
  \coordinate (c4) at (0.18,0.26);   
 \coordinate (c1) at (.4,0.5);
  \coordinate (k3) at (1.12,0.15);
   \coordinate (k4) at (.81,0.56);
   \coordinate (l) at (.835,.835);
   \coordinate (s) at (.666,.44);
   \coordinate (sf) at (.23,.4);
   \coordinate (sb) at (-.52,.6);
         % CUBEs 
         \fill[fill=red!60] (c1) -- (c2) -- (c4) -- (c3) -- (c1);
         \fill[fill=blue!60] (c1) -- (c2) -- (k3) -- (k4) -- (c1);
         \fill[fill=violet!60] (c1) -- (c3) -- (l) -- (k4) -- (c1);
    \draw (c1) to (c2);
    \draw[] (c2) to (c4);
  \draw[] (c4) to (c3);
  \draw (c3) to (c1);
  \draw (k3) to (c2);
    \draw (k4) to (c1);
    \draw (k4) to (k3);
    \draw (k4) to (l);
    \draw (c3) to (l);
     \draw[densely dashed] (c1) -- (k4) -- (s) -- (sf) -- (c1);
      \draw[densely dashed]  (sf) -- (sb) -- (c3) -- (c1);
   \filldraw[fill=red!60!blue] (c2) circle (0.023);
   \filldraw[fill=red] (c3) circle (0.023);
     \filldraw[fill=red] (c4) circle (0.023); 
      \draw (c1) to (c2);
           \filldraw[fill=orange] (c1) circle (0.04);
               \filldraw[fill=blue] (k3) circle (0.023);
                \filldraw[fill=blue] (k4) circle (0.023);
                  \filldraw[fill=violet] (l) circle (0.023);
                      \filldraw[fill=black] (s) circle (0.01);
    \filldraw[fill=black] (sf) circle (0.01);
     \filldraw[fill=black] (sb) circle (0.01);
                 % SIMPLEX
 \filldraw[fill=black] (v1) circle (0.03) node[left]{$1$};
  \filldraw[fill=black] (v2) circle (0.03) node[right]{$2$};
   \filldraw[fill=black] (v4) circle (0.03) node[above]{$4$};
    \filldraw[fill=black] (v3) circle (0.03) node[right]{$3$};  
  \draw[thin] (v1) to (v2);
  \draw[thin] (v1) to (v4);
 \draw[thin] (v2) to (v4);
  \draw[thin] (v4) to (v3);
  \draw[thin,dashed] (v1) to (v3);
    \draw[thin] (v2) to (v3);
  \end{tikzpicture}  
\caption{A graph $G$ and three cubes $c(G,T)$ for $T=\set{e_2,e_4}$ (red), $T=\set{e_1,e_2}$ (cyan), and $T=\set{e_1,e_4}$ (blue) inside the simplex $\Delta_G$; the orange vertex lies in the center of $\Delta_G$. The cubes associated to the other two spanning trees of $G$ are indicated by dashed lines. Together the five cubes form the spine of $\Delta_G$.}\label{fig:cubes_in_simplex}
\end{figure}

\subsection{The homotopy equivalence $MG_g^0 \simeq S_g$.}\label{ss:HE_MGandSpine}
The fact that $S_g$ is a deformation retract of $MG_g^0$ was first proven in the context of Outer space \cite{cv}. Homotopy equivalence also follows from the existence of a fibration $MG_g^0 \to S_g$ \cite{mb:ltd}. 

We give here an explicit proof of this fact. Restricting the decomposition \eqref{eq:MG_cube_decomposition} to pairs $(G,\gamma)$ with $h_\gamma=0$ gives rise to a decomposition of the spine
\[
S_g =  \Big( \bigcup_{(G,F)} \ti{c}(G,F)  \Big)/\sim ,
\]
where the union is over all (isomorphism classes of) pairs $(G,F)$ with $F$ a forest in $G$. We write $S_G=\ti \Delta_G \cap S_g$ for the spine of a single cell.

\begin{thm} \label{thm:spine_retract}
The spine $S_g$ is a deformation retract of $MG_g^0$.
\end{thm}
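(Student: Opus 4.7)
The plan is to build an explicit, canonical flow on each cell of $MG_g^0$ that retracts onto the spine, and to verify that these cellwise flows glue into a continuous deformation retract. On a fixed cell $\Delta_G$---with $G$ a stable weight-zero graph, hence $h_G = g\geq 2$, so $G$ has cycles---I would introduce the two continuous functions on the cone $\sigma(G)=\RR_{\geq 0}^{E_G}$
\[
M(x)=\max_{e\in E_G}x_e, \qquad B(x)=\min_{C \text{ a cycle of }G}\,\max_{e\in C} x_e.
\]
The observation underlying everything is that $[x]\in S_g$ iff $\{e:x_e<M\}$ is a forest iff $B(x)=M(x)$, whereas $[x]\in MG_g^0$ iff $\{e:x_e=0\}$ is a forest iff $B(x)>0$.

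I would then define the multiplicative flow
\[
\Phi_t(x)_e=\min\bigl(x_e\cdot (M(x)/B(x))^t,\;M(x)\bigr), \qquad t\in[0,1].
\]
Then $\Phi_0=\mathrm{id}$, the zero coordinate set is preserved (so the flow stays in $MG_g^0$), the spine is fixed pointwise since $M/B=1$ there, and at $t=1$ an edge $e$ satisfies $\Phi_1(x)_e=M$ exactly when $x_e\geq B$. Hence the non-max edges of $\Phi_1(x)$ are precisely $\{e:x_e<B\}$, a forest by the defining minimality of $B$, so $\Phi_1(x)\in S_g$. The formula is positively $\RR$-homogeneous in $x$, so it descends to $\Delta_G$; and since $M$ and $B$ are invariant under the coordinate permutations induced by $\Aut(G)$ and $\Phi_t$ permutes equivariantly, it descends further to the quotient cell $\tilde\Delta_G$.

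The essential compatibility is that the cellwise flows agree on common faces. If $x\in \Delta_G$ has $x_e=0$ so that $[x]$ also represents a point $[\bar x]\in \Delta_{G/e}$, each cycle of $G$ either avoids $e$ (and is already a cycle of $G/e$ with the same maximum) or contains $e$ (where $x_e=0$ is not the maximum, so collapsing $e$ yields a cycle of $G/e$ with the same maximum). Therefore $M_G(x)=M_{G/e}(\bar x)$ and $B_G(x)=B_{G/e}(\bar x)$, the flow values on shared coordinates agree, and $\Phi_t(x)_e=0$ matches the absence of $e$ in $G/e$. Iterating over forests of collapsed edges gives compatibility across all face identifications, and assembling the $\Phi_t$ produces the desired deformation retraction $H\colon MG_g^0\times[0,1]\to MG_g^0$ with $H(\cdot,0)=\mathrm{id}$, $H(\cdot,1)$ a retract onto $S_g$, and $H(\cdot,t)|_{S_g}=\mathrm{id}$. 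The main subtlety I expect is joint continuity at the locus $x_e\to 0^+$, which is handled precisely by the multiplicative form of $\Phi_t$: a collapsed edge carries value zero and is thereby invisible to both $M$ and $B$.
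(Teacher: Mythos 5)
Your proof is correct, and it takes a cleaner route than the paper's. The paper works inside the barycentric subdivision: each cube $c(G,\gamma)$ is cut into semi-open simplices $s(\sigma)$ indexed by orderings $\sigma$ of $E_\gamma$, a forest $F(\sigma)$ is extracted greedily as the longest initial segment of the order that is a forest, the retraction is defined piecewise-linearly simplex by simplex, and then equivariance and the three face-compatibility cases are checked by hand, with a straight-line homotopy at the end. You instead encode the whole combinatorial content in the two intrinsic functions $M(x)=\max_e x_e$ and $B(x)=\min_C\max_{e\in C}x_e$ (a minimum over the finitely many circuits, hence continuous), characterize $MG_g^0$ by $B>0$ and $S_g$ by $B=M$, and retract by a single homogeneous formula; equivariance is automatic because $M$ and $B$ are isomorphism-invariant, and face compatibility reduces to the behaviour of circuits under collapsing a non-loop edge. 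Amusingly, the two constructions produce projectively the same retraction: your $\Phi_1$ sets edges with $x_e\ge B$ to $M$ and rescales the rest by $M/B$, while the paper's $r_\sigma$ keeps the small edges and levels everything with $x_e\ge x_{\sigma(k+1)}$ at the threshold value, and these differ by the overall factor $M/B$. What your version buys is the elimination of the permutation bookkeeping; what the paper's buys is that the retraction is manifestly cellular for the cubical decomposition reused later for the forested graph complex. Two small points you should make explicit: your characterization of $S_g$ uses that a cube $c(G,F)$ consists of points where all edges off the forest $F$ share the common \emph{maximal} value (the constraint $x_k\le x_\ell$ for $k\in E_F$, $\ell\notin E_F$, which is implicit in the paper's description of the cubes), and the gluing across distinct cells also involves identifications by graph isomorphisms, which is again covered by the naturality of $M$ and $B$ but deserves a sentence.
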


\begin{proof}
We construct a deformation retract $r \colon MG_g^0 \to S_g$ first cell-by-cell, then show that each map is $\Aut(G)$-equivariant and respects face relations.

Consider a $n$-cube $\cube \subset \Delta_G$ with $n=e_\gamma$. It decomposes into $n!$ semi-open $n$-simplices $s(\sigma)$, indexed by $\sigma \in S_n$,
\[
s(\sigma)=s(G,\gamma,\sigma) = \left\{ [x_e]_{e \in E_G} \ \middle\vert \begin{array}{l} x_e>0 \quad \forall e \in E_G, \\
    x_i = x_j \quad \forall i,j \notin E_\gamma ,\\
     x_k \leq x_\ell  \quad \forall k\in E_\gamma, \ell \notin E_\gamma, \\
     x_{\sigma(a)} \leq x_{\sigma(b)} \quad \forall a,b \in E_\gamma 
  \end{array}\right\}.
\]
In other words, the closures of the $s(G,\gamma,\sigma)$ for $\gamma\subset G$ and $\sigma \in S_{e_\gamma}$ form the barycentric subdivsion $\beta(\Delta_G)$ of $\Delta_G$.

For each $\sigma$ we define a forest $F(\sigma)=F(G,\gamma,\sigma) \subset \gamma$ by 
\[
E_{F(\sigma)}= \max \Big\{ \set{e_{\sigma(1)}, \ldots, e_{\sigma(k)}} \text{ is a forest in }\gamma \Big\}. 
\]
In words, $\sigma$ specifies an order on $E_\gamma$, and we take the largest ordered subset such that it includes $e_{\sigma(1)}$ and is a forest in $\gamma$.

Define a map $ r_\sigma= r_{G,\gamma,\sigma} \colon s(\sigma)  \to c\big(G,F(\sigma)\big)$ by 
\[
    r_\sigma(x_i)=
    \begin{cases}
    x_{\sigma(1)} & \text{ if } E_{F(\sigma)} = \varnothing, \\
    x_i & \text{ if } i=\sigma(j) \text{ for some } j\in \set{1,\ldots,k}, \\
    x_{\sigma(k+1)} & \text{ else. }
        \end{cases}
\]
See \cref{fig:retract} for an example.

\begin{figure}[h]
\begin{tikzpicture}[scale=.8]
\coordinate (v0) at (0,0);
  \coordinate  (v1) at (0,2.5);
   \coordinate  (v2) at (0,1.5);
   \draw[white] (v0) to (v2);
   \draw (v1) to[out=-30,in=30] node[midway,xshift=.25cm,yshift=-.16cm] {$e_2$} (v2);
   \draw (v1) to[out=-150,in=150] node[left,xshift=-.3cm,yshift=.33cm] {$G=$} node[midway,xshift=-.2cm,yshift=-.15cm] {$e_1$} (v2);
   \draw[scale=2.5] (v1) to[in=50,out=130,loop] node[midway,xshift=0cm,yshift=.18cm] {$e_3$} (v1);
  \filldraw[fill=black] (v1) circle (0.07);
  \filldraw[fill=black] (v2) circle (0.07);
  \end{tikzpicture}
  %%%%%%%%%%%%%%%%%%%%%%%%%%%%%%%%%%%%%%%%%%%%%%
  \quad  \quad 
  %%%%%%%%%%%%%%%%%%%%%%%%%%%%%%%%%%%%%%%%%%%%%%5
 \begin{tikzpicture}[scale=.8]
\coordinate (l) at (-3,0);
  \coordinate  (r) at (3,0); 
   \coordinate  (o) at (0,3);
  \coordinate (lo) at (-1.5,1.5); 
   \coordinate (lr) at (0,0); 
  \coordinate (ro) at (1.5,1.5); 
  \coordinate (c) at (0,1.1); 
   \draw[dashed] (l) -- (r);
   \draw (l) -- (o) ;
   \draw (r) -- (o) ;
   \fill[blue!50] (lo) -- (o) -- (c) -- (lo);
   \fill[blue!50] (lo) -- (l) -- (c) -- (lo);
   \fill[cyan!50] (ro) -- (o) -- (c) -- (ro);
   \fill[cyan!50] (ro) -- (r) -- (c) -- (ro);
   \fill[teal!50] (l) -- (r) -- (c) -- (l);
     \draw[line width=0.5mm, blue] (lo) -- (c);
   \draw[line width=0.5mm, cyan] (ro) -- (c);
  \filldraw[fill=black] (lo) circle (0.07);
 \filldraw[fill=black] (ro) circle (0.07);
  \filldraw[teal] (c) circle (0.08);
 \filldraw[fill=white] (l) circle (0.07) node[below]{$[1:0:0]$};
 \filldraw[fill=white] (r) circle (0.07) node[below]{$[0:1:0]$};
 \filldraw[fill=white] (o) circle (0.07) node[above]{$[0:0:1]$};
  \end{tikzpicture} 
   \caption{An illustration of the retraction $r\colon MG_g^0 \to S_g$ in a cell $\Delta_G$. The spine is the 1-dimensional subcomplex of $\beta(\Delta_G)$ with vertices $[1:0:1]$, $[1:1:1]$ and $[0:1:1]$, the colors indicate to which parts of it the other cells collapse. Note that the top vertex and the baseline are not part of $MG_g^0$.} \label{fig:retract}
\end{figure}
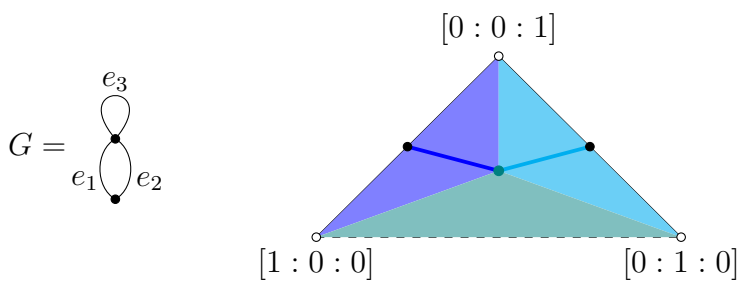

Let $\iota$ denote the inclusion $s(\sigma) \hookrightarrow \Delta_G$. Then $x_e \mapsto tx_e + (1-t) r_\sigma(x_e)$ defines a homotopy between $\iota|_{c(G,F(\sigma))} \circ r_\sigma$ and $\id_{s(\sigma)}$. 

In addition, each $r_\sigma$ is compatible with face relations:
\begin{enumerate}
    \item If $\sigma$ and $\tau$ differ by a transposition $(ij)$, then $s(\sigma) \cap s(\tau)\subset \set{x_i=x_j}$. On this locus the maps $r_\sigma$ and $r_\tau$ agree.
    \item The same holds for an intersection $s(G,\gamma,\sigma) \cap s(G,\gamma',\sigma')$ which lies in $c\big(G,F(G,\gamma,\sigma)\cap F(G,\gamma',\sigma')\big)$.
    \item Lastly, we consider $\set{x_e=0} \subset \overline{ s(\sigma) }$ for $e \in F(\sigma)$. This is homeomorphic to $s(G/e,\gamma/e,\sigma/e)$ where $\sigma/e$ denotes the induced order on $E_{\gamma/e}$. Moreover, $r_{G,\gamma,\sigma}$ extends to $\set{x_e=0}$ where it equals $f_e \circ r_{G/e,\gamma/e,\sigma/e}$ with $f_e$ denoting the face map $f_e \colon \PP^{G/e} \overset{\sim}{\to} \set{x_e=0} \subset \PP^G$.
\end{enumerate}

For the equivariance property note that $\varphi \in \Aut(G)$ maps $s(\sigma)\subset c(G,\gamma)$ bijectively to $s(\sigma')  \subset c\big(G,\varphi(\gamma)\big)$ with $\sigma'=\sigma\circ \varphi$. Furthermore, since $\varphi$ maps forests to forests, we have $F(\sigma')=\varphi\left( F(\sigma) \right)$. It follows that $r|_{\Delta_G}$ induces a well-defined map on the quotients $\ti \Delta_G \to  S_G$, and therefore the family $\set{r_\sigma}_{\sigma}$ assembles to a well-defined deformation retract $r \colon MG_g^0 \to S_g$.
\end{proof}

\section{Graph complexes as (relative) cellular chain complexes}\label{s:MGandGC}

In this section we relate the complexes $\comm_N$ and $\lie_N$ to certain cellular chain complexes associated to the moduli spaces introduced in \cref{s:moduli_space_of_graphs}. We finish with a short remark on the associative case. The discussion is split into subsections, according to the relevant graph complex.
\newline

The argument is based on one main feature of both the moduli space of graphs and its spine: Their ``natural" decomposition into cells indexed by graphs allows to describe them as a union of cones $CX$ where $X$ is either a simplex (moduli space of graphs) or a cube (spine), glued together along face relations. This decomposition gives rise to a small chain complex that is quasi-isomorphic to the respective complex of singular chains, and isomorphic to the relevant graph complex. 

A key tool for using this observation is the following
\begin{lem} \label{lem:quotient_homology}
    %Let $S^n$ denote the $n$-sphere, $D^n$ the $n$-disk. If $\Gamma$ is a finite linear group acting on $S^n$%by permuting the coordinates of $\RR^{n+1}$
    Let $\Gamma$ be a finite group acting on the $n$-sphere $S^n$. Then 
    \[
    \ti H_\bullet (  S^n/\Gamma ; \QQ ) = \begin{cases}
        \ti H_\bullet(S^n ; \QQ) & \text{action induces no orient.-rev.\ homeomorphisms,} \\
        0 & \text{else.}
    \end{cases}
    \]
\end{lem}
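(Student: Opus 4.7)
The plan is to invoke the standard transfer argument for finite group actions with rational coefficients. For any space $X$ with an action of a finite group $\Gamma$ by homeomorphisms and quotient map $\pi \colon X \to X/\Gamma$, there is a transfer $\tau \colon H_\bullet(X/\Gamma;\QQ) \to H_\bullet(X;\QQ)$ satisfying $\pi_* \circ \tau = |\Gamma| \cdot \id$, while $\tau \circ \pi_*$ coincides with averaging over $\Gamma$. Working over $\QQ$, this gives an isomorphism
\[
H_\bullet(X/\Gamma;\QQ) \cong H_\bullet(X;\QQ)^\Gamma,
\]
identifying the homology of the quotient with the subspace of $\Gamma$-invariants, and reduces the lemma to computing the invariants of the induced $\Gamma$-action on $H_\bullet(S^n;\QQ)$.

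Reduced homology $\ti H_\bullet(S^n;\QQ)$ is concentrated in degree $n$, where it equals $\QQ$. Every self-homeomorphism of $S^n$ acts on this line by multiplication by $\pm 1$, with sign $+1$ precisely when it is orientation-preserving. Hence the $\Gamma$-action on $H_n(S^n;\QQ)$ factors through the character $\chi \colon \Gamma \to \set{\pm 1}$ recording orientation-preservation. The invariants equal $\QQ$ if $\chi$ is trivial, i.e., no element of $\Gamma$ reverses orientation, and vanish otherwise. In degree zero, $S^n/\Gamma$ is connected (as a quotient of $S^n$, which is connected for $n\geq 1$; and for $n=0$ the statement is trivial), so $\ti H_0$ vanishes on both sides. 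Combining the two degrees yields the claimed dichotomy.

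The main obstacle is really just invoking the transfer homomorphism cleanly; no serious calculation is required beyond it. If a self-contained construction is preferred, $\tau$ can be built on singular chains by sending a simplex $\sigma \colon \Delta^k \to X/\Gamma$ to $\tfrac{1}{|\Gamma|}\sum_{\gamma\in\Gamma} \gamma \cdot \tilde\sigma$ for any lift $\tilde\sigma$ along $\pi$, and one checks this is well-defined on homology (independent of the choice of lift) and produces the stated averaging formula; rational coefficients are needed precisely because of the factor $\tfrac{1}{|\Gamma|}$.
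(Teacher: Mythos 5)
Your argument is correct and is essentially the paper's: the paper invokes the standard fact that $H_\bullet(X/\Gamma;\QQ)\cong H_\bullet(X;\QQ)_\Gamma$ for a finite group acting on a CW-complex (citing Bredon) and then computes in top degree, which over $\QQ$ is the same as your transfer/invariants identification $H_\bullet(X/\Gamma;\QQ)\cong H_\bullet(X;\QQ)^\Gamma$ followed by the observation that the action on $H_n(S^n;\QQ)$ factors through the orientation character. One minor caveat: your optional ``self-contained'' construction of $\tau$ by lifting singular simplices only works verbatim for free actions (for non-free actions the quotient map is not a covering and lifts need not exist), so for the simplicial, non-free actions arising here one should either cite the general result or define the transfer on simplicial chains after an equivariant subdivision.
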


\begin{proof}
We use the following statement (see e.g.\ \cite{bredon:intro_compact_tragrps}): If $\Gamma$ is a finite group acting on a CW-complex $X$, then $H_\bullet(X; \QQ)$ is a $\Gamma$-module, and the space of its coinvariants 
\[
H_\bullet( X ; \QQ)_\Gamma=H_\bullet(X; \QQ)/ I, \quad  I=\langle g_*h -h \mid h\in H_\bullet(X; \QQ), g \in \Gamma \rangle,
\]
is isomorphic to the homology of the quotient, $H_\bullet( X ; \QQ)_\Gamma \cong
H_\bullet(X/\Gamma;\QQ) $.

Now consider $H_n(S^n)=\langle [S^n] \rangle$. If there exist $g\in \Gamma$ such that $g_*S^n=-S^n$, then $I=\langle 2[S^n] \rangle $ and therefore $H_n(S^n ; \QQ)_\Gamma=0$. Otherwise $I=0$, so that $H_n( S^n ; \QQ)_\Gamma=H_n( S^n ; \QQ)$.
\end{proof}

\subsection{$\comm_N$ for $N$ even} \label{ss:comm_cellular}

Recall from \cref{eq:d_in_comm} the definition of the differential $d$ on $\comm_N$, and suppose first that $G$ is a graph without multi-edges and tadpoles. From the description of $MG_g$ as a cell complex we see that the terms in $dG$ are in one-to-one correspondence with the signed topological boundary of the cell $\Delta_G$. This is however not quite what we want. Firstly, we need to account for the fact that $G \sla e$ vanishes if $e$ is a tadpole. Secondly, we need to assign $G$ to a building block $\ti \Delta_G$ of $MG_g$; related to this is the fact that terms in $dG$ vanish if they have odd symmetries. 

The first problem is solved by considering the pair $(MG_g,MG_g^{\geq 1})$. 
Working modulo the subcomplex of graphs of positive weight is the geometric analog of dropping all terms from $dG$ whose loop number is less than $h_G$.

To solve the second problem we set up a sort of cellular chain complex generated by the family $\set{\Delta_G}_G$ that computes (relative) singular homology of $MG_g$. We sketch the construction from \cite{cgp} (see also \cite{hv,chkv}). Fix $g\geq 2$ and let
\[
C_k= \QQ \langle ( \Delta_G,\eta) \mid g(G)=g,e_G= k+1 \rangle / \sim 
\]
where $G=(G,w)$ is a genus $g$ weighted graph on $k+1$ edges, $\eta \in \det \QQ^{E_G}$ is an even orientation, and $\sim$ is the relation
\begin{align}
  & (G,w,-\eta)  = -(G,w,\eta),\tag{$i$} \\ 
  & (G,w,\eta) = (G',w', \varphi^* \eta) \text{ for any isomorphism } \varphi \colon (G',w) \overset{\sim}{\to} (G,w). \tag{$ii$}
\end{align}
Note that $\eta$ also specifies an orientation of both $\Delta_G$ and $\ti \Delta_G$ as subsets of $\PP^G=\PP(\RR^{E_G})$, hence the abuse of notation. To define the differential let 
\[
\pi\colon \QQ \langle ( \Delta_G,\eta) \rangle \longrightarrow \QQ \langle [\Delta_G,\eta]  \rangle = \QQ \langle ( \Delta_G,\eta)  \rangle / \sim
\]
 denote the quotient map. Then $\partial \colon C_k \to C_{k-1}$ is defined by 
\[
\partial [ \Delta_G, \eta ]  = \pi \circ \partial^{simp} (\Delta_G,\eta),
\]
where $\partial^{simp}$ is the usual simplicial boundary map.

To show that the complex $(C_\bullet, \partial)$ computes singular homology we construct a chain map $i$ as follows. Let $\beta(\Delta_G)$ denote the barycentric subdvision of $\Delta_G$. Composing $\beta$ with the quotient map $\Delta_G\to\ti \Delta_G$ induces a chain map 
\[
i \colon C_k \longrightarrow C_k^{sing}(MG_g). 
\]

\begin{prop} \label{prop:quasi_iso}
    The map $i$ is a quasi-isomorphism, $H(C_\bullet,\partial) \cong H^{sing}(MG_g)$.
\end{prop}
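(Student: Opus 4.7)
The plan is to recognize $(C_\bullet,\partial)$ as the cellular chain complex of $MG_g$ with respect to its skeletal filtration, and to use the preceding \cref{lem:quotient_homology} to compute the associated graded of the singular complex under the same filtration. Because $i$ is defined cell by cell via barycentric subdivision followed by the quotient map, it is automatically compatible with the filtration, so one only needs to check that it induces an isomorphism on the associated graded, and that the boundary maps match.

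Concretely, first I would filter $MG_g$ by its dimension skeleta
\[
\emptyset = MG_g^{(-1)} \subset MG_g^{(0)}\subset MG_g^{(1)} \subset \cdots \subset MG_g^{(3g-4)} = MG_g,
\]
where $MG_g^{(k)}$ is the union of quotient cells $\ti\Delta_G$ with $e_G-1\leq k$. This gives the usual first-quadrant spectral sequence
\[
E^1_{k,\ell}=H_{k+\ell}\bigl(MG_g^{(k)},MG_g^{(k-1)};\QQ\bigr)\ \Longrightarrow\ H_{k+\ell}^{sing}(MG_g).
\]
By excision around the open $k$-cells,
\[
E^1_{k,\ell}=\bigoplus_{G:\, e_G=k+1} H_{k+\ell}\bigl(\ti\Delta_G,\partial\ti\Delta_G;\QQ\bigr),
\]
and since $(\Delta_G,\partial\Delta_G)\cong (D^k,S^{k-1})$, \cref{lem:quotient_homology} identifies each summand with $\QQ$ concentrated in $\ell=0$ when $\Aut(G)$ acts on $\Delta_G$ by orientation-preserving homeomorphisms, and with $0$ otherwise. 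Since $\Aut(G)$ acts on $\Delta_G\subset \PP^G$ by permuting the projective coordinates indexed by $E_G$, the orientation-preserving condition is exactly that every $\varphi\in\Aut(G)$ induces an even permutation on $E_G$. This matches precisely the vanishing condition for $[G,\eta]$ in $C_k$ coming from relations \eqref{eq:orient_i} and \eqref{eq:orient_ii}; thus the map $i$ induces a canonical isomorphism $C_k\overset{\sim}{\to} E^1_{k,0}$, while $E^1_{k,\ell}=0$ for $\ell\neq 0$.

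Next I would verify that the differential $d^1\colon E^1_{k,0}\to E^1_{k-1,0}$, which is the connecting homomorphism of the triple $(MG_g^{(k)},MG_g^{(k-1)},MG_g^{(k-2)})$, agrees with $\partial=\pi\circ\partial^{simp}$ under the above identification. This is local on each cell: restricted to $\ti\Delta_G$, the connecting map is the signed sum over codimension-one faces, each such face being either a face $\ti\Delta_{G/e}$ in the interior (when $e$ is not a tadpole) or a cell in $MG_g^{\geq 1}$ (which is \emph{killed} by the quotient $\pi$ precisely because collapsing a tadpole produces a weighted vertex, landing in $MG_g^{(k-1)}\cap MG_g^{\geq 1}$). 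This exactly reproduces the rule $G\sla e=0$ for tadpoles built into $\partial$, and the sign comparison reduces to the signed face map of the simplex $\Delta_G$, which is $\partial^{simp}$.

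Given collapse at $E^2=E^\infty$ (since $E^1$ is concentrated in a single row), I would conclude $H_k(C_\bullet,\partial)\cong H_k^{sing}(MG_g;\QQ)$ and that the isomorphism is induced by $i$. The main obstacle is bookkeeping: verifying both the orientation-matching step (that the $\Aut(G)$-action is orientation-preserving on $\Delta_G$ iff every automorphism acts by an even permutation on $E_G$) and the sign match between the connecting homomorphism and $\pi\circ\partial^{simp}$, including the automatic vanishing of tadpole-collapse terms; everything else is a standard spectral-sequence comparison.
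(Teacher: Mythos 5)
Your overall strategy is the same as the paper's: filter $MG_g$ by skeleta, use the fact that each quotient cell is a cone over its boundary so that the relative homology of a cell pair is $\QQ$ or $0$ according to whether $G$ has an odd symmetry (\cref{lem:cone_over_boundary}, which reduces to \cref{lem:quotient_homology}), and compare with the corresponding filtration on $C_\bullet$. The paper runs this as a five-lemma induction on the long exact sequences of the skeleta; your spectral sequence concentrated in the single row $\ell=0$ is the same argument in different packaging, so nothing is gained or lost there.

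There is, however, one genuine error in your treatment of the differential. The proposition concerns the \emph{absolute} singular homology of $MG_g$, and the complex $C_\bullet$ is generated by all stable \emph{weighted} graphs of genus $g$. When $e$ is a tadpole of $G$, the face $\set{x_e=0}$ of $\ti\Delta_G$ is the cell $\ti\Delta_{G/e}$ of a weighted graph; it lies in $MG_g^{\geq1}$, but it is still a cell of $MG_g$ and a generator of $C_{k-1}$, so it is \emph{not} killed by $\pi$ (which only imposes the sign and isomorphism relations), and it appears in both $d^1$ and $\pi\circ\partial^{simp}$. The rule $G\sla e=0$ for tadpoles lives in $\comm_N$, not in $C_\bullet$; geometrically it is implemented only afterwards, by passing to the relative complex of the pair $(MG_g,MG_g^{\geq1})$ as in \eqref{eq:iso_comm}. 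If your $d^1$ really discarded the tadpole faces it would fail to agree with $\partial$ and the comparison would break; the correct (and simpler) statement is that every codimension-one face of $\ti\Delta_G$ contributes, weighted or not, and the two differentials match on the nose. With that correction your argument goes through.
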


The proof uses the following 
\begin{lem}\label{lem:cone_over_boundary}
Each quotient cell $\ti \Delta_G$ is a cone over its boundary $\partial \ti \Delta_G$\footnote{More precisely, $\ti \Delta_G$ is a cone over the quotient of $\partial \Delta_G$; if $G$ has an odd symmetry, then the boundary of this cone has additional components. See \cref{fig:cubes_and_quotients} for an example.}. Therefore, 
\[
H_{e_G-1}^{sing}(\ti \Delta_G,\partial \ti \Delta_G) \cong \begin{cases} \QQ & \text{ if } [G]\neq 0, \\
0 & \text{ if } [G]=0.
\end{cases}
\] 
\end{lem}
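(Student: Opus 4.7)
The plan is to first identify the cone structure on $\ti\Delta_G$, then use the long exact sequence of the pair to reduce the computation to a top-dimensional homology of a sphere quotient, and finally invoke \cref{lem:quotient_homology} together with the characterisation of vanishing classes in $\comm_N$ to finish.

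\textbf{Cone structure.} I would start by observing that $\Delta_G$ is a simplex of dimension $e_G - 1$ sitting inside $\PP^G$, and its barycenter $b$ (the class of $(1,\ldots,1)\in \RR^{E_G}$) is fixed by every $\varphi\in \Aut(G)$, since $\Aut(G)$ acts by permuting coordinates. The straight-line homotopy $(x,t)\mapsto (1-t)b + tx$ from $\Delta_G$ to $\partial \Delta_G$ is therefore $\Aut(G)$-equivariant, so it descends to a cone structure
\[
\ti\Delta_G \;\cong\; \mathrm{Cone}\bigl(\partial\Delta_G/\Aut(G)\bigr)
\]
with apex the image of $b$. Taking $\partial \ti\Delta_G := \partial \Delta_G / \Aut(G)$ (as the footnote clarifies), this presentation shows in particular that $\ti\Delta_G$ is contractible.

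\textbf{Long exact sequence.} Since $\ti \Delta_G$ is contractible, the long exact sequence of the pair $(\ti\Delta_G,\partial\ti\Delta_G)$ collapses to isomorphisms
\[
H_{k}^{sing}(\ti\Delta_G,\partial\ti\Delta_G)\;\cong\;\ti H_{k-1}^{sing}(\partial\ti\Delta_G)
\]
for all $k\geq 1$. Setting $k = e_G - 1$ we reduce to computing the top homology of a quotient of $\partial \Delta_G \cong S^{e_G-2}$ by $\Aut(G)$.

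\textbf{Applying the sphere-quotient lemma.} By \cref{lem:quotient_homology}, $\ti H_{e_G-2}(\partial\Delta_G/\Aut(G);\QQ)$ is $\QQ$ if no element of $\Aut(G)$ induces an orientation-reversing homeomorphism of $S^{e_G-2}$, and $0$ otherwise. Because $\Aut(G)$ acts on $\partial\Delta_G$ by permuting the vertices (which correspond bijectively to $E_G$), an element $\varphi$ is orientation-reversing on the boundary sphere if and only if the induced permutation of $E_G$ is odd. By the lemma following the definition of $\comm_N$, for $N$ even this is exactly the condition under which $[G] = 0$ in $\comm_N$. Combining this with the isomorphism from the previous paragraph yields the claimed dichotomy.

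\textbf{Expected obstacle.} The only delicate point is not the algebra but the identification of $\partial \ti\Delta_G$: fixed loci of $\Aut(G)$ in the interior of $\Delta_G$ (e.g.\ hyperplanes $\{x_i = x_j\}$ swapped by a transposition) may produce extra components of the topological boundary of the quotient cell when it is viewed as a subspace of $MG_g$. I would handle this by adopting the convention made implicit in the statement of the lemma (and spelled out in the footnote): the relevant ``boundary'' is the image of $\partial\Delta_G$ under the quotient, which is exactly what the cone description delivers and what the relative singular homology $H_{e_G-1}^{sing}(\ti\Delta_G,\partial\ti\Delta_G)$ is computed against.
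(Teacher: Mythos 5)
Your proof is correct and follows essentially the same route as the paper's own argument: the $\Aut(G)$-equivariant cone structure on $\Delta_G$ with apex the barycenter, reduction via the pair $(\ti\Delta_G,\partial\ti\Delta_G)$ to the top reduced homology of $\partial\Delta_G/\Aut(G)\cong S^{e_G-2}/\Aut(G)$, and then \cref{lem:quotient_homology} combined with the fact that $[G]=0$ exactly when some automorphism induces an odd permutation of $E_G$ (equivalently, acts orientation-reversingly on the boundary sphere). Your closing remark about taking $\partial\ti\Delta_G$ to mean the image of $\partial\Delta_G$ rather than the full topological boundary of the folded quotient is precisely the convention the paper fixes in its footnote.
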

\begin{proof}
    The cone statement is clear. The second statement follows from \cref{lem:quotient_homology}, since $\partial \ti \Delta_G$ is the quotient of the $(e_G-2)$-dimensional sphere $\partial \Delta_G$ by the group $\Aut(G)$, and $[G]=0$ if and only if $\Aut(G)$ contains an odd permutation.
\end{proof}

\begin{proof}[Proof of \cref{prop:quasi_iso}]
    Following \cite[Thm.2.27]{hatcher:AT} we filter $MG_g$ by subcomplexes $X^p$ consisting of graphs with at most $p+1$ edges (cells of dimension $\leq p$). For $C_\bullet$ we get an analogous filtration $\ldots \subset C_\bullet^p \subset C_\bullet^{p+1} \subset \ldots \subset C_\bullet$. 
    
    The map $i$ gives rise to a commutative diagram of long exact sequences
  \be \label{eq:les} \xymatrix{
      \cdots \ar[r] &  H_n(C_\bullet^p,C_\bullet^{p-1}) \ar[r] \ar[d] & H_{n-1}(C_\bullet^{p-1}) \ar[r] \ar[d] & H_{n-1}(C_\bullet^p) \ar[r] \ar[d] & \ldots \\ 
       \cdots \ar[r] &  H_n^{sing}(X^p,X^{p-1}) \ar[r] & H_{n-1}^{sing}(X^{p-1}) \ar[r]& H_{n-1}^{sing}(X^p) \ar[r] & \ldots
    }\ee
    
     By \cref{lem:cone_over_boundary} the groups $H_n^{sing}(X^p,X^{p-1})$ are trivial unless $n=p$ in which case 
    \[
    H_p^{sing}(X^p,X^{p-1})=\bigoplus_{G \colon e_G=p+1} H_{p}^{sing}(\ti \Delta_G,\partial \ti \Delta_G) \cong\bigoplus_{ \substack{ [G] \neq 0 \\  e_G=p+1}} \QQ.
    \]
    These groups are canonically isomorphic to 
    \[
    H_n(C_\bullet^p,C_\bullet^{p-1})=
    \begin{cases} 
        C_p & \text{ if } n=p,\\
        0 & \text{ else.}
    \end{cases}
    \]
    Therefore in \eqref{eq:les} all the vertical maps between relative homology groups are isomorphisms. It follows then by induction on $p$ and the five lemma that all vertical arrows are isomorphisms.
\end{proof}

An analogous statement holds for relative homology and cohomology of $MG_g$. 
Putting everything together and composing the map $i$ with 
\[
\comm_N^{(g)}  \longrightarrow C_\bullet,  \quad 
        [G,\eta]  \longmapsto [\Delta_G,\eta], 
\]
shows that 
\be \label{eq:iso_comm}
H_\bullet(\comm_N^{(g)}) \cong H_{\bullet+Ng-1}(MG_g,MG_g^{\geq 1})\cong\ti H_{\bullet+Ng-1}(MG_g),
\ee
where the last equality follows from the fact that the subcomplex $MG_g^{\geq 1}$ is contractible; see \cite{cgp}. 
\newline

The finite dimensionality of $MG_g$ (\cref{prop:dim_mod_space}) implies 
\begin{cor}
    $H_i(\comm_N^{(g)})=0$ unless $1-Ng\leq i \leq 3(g-1)-Ng$.
\end{cor}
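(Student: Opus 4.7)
The plan is to read the corollary off directly from the isomorphism \eqref{eq:iso_comm} just established, combined with the dimension computation in \cref{prop:dim_mod_space}.

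First I would write $j = i + Ng - 1$ and invoke \eqref{eq:iso_comm} to reduce the claim to showing that $\ti H_j(MG_g) = 0$ unless $0 \leq j \leq 3g-4$. The lower bound is immediate: singular homology (reduced or unreduced) vanishes in negative degrees, so $\ti H_j(MG_g)=0$ for $j < 0$, which via $i = j - Ng + 1$ translates to $i \geq 1 - Ng$.

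For the upper bound, I would invoke \cref{prop:dim_mod_space}, which states that $\dim MG_g = 3g-4$. Since $MG_g$ is a finite generalized cone complex of this dimension, its singular homology (in fact, already its cellular chain complex in the sense of the small complex $(C_\bullet,\partial)$ from \cref{ss:comm_cellular}) is concentrated in degrees $0$ through $3g-4$. Hence $\ti H_j(MG_g) = 0$ for $j > 3g - 4$, which under the substitution $j = i+Ng-1$ gives $i \leq 3g - 3 - Ng = 3(g-1)-Ng$.

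There is no real obstacle here; the only thing worth being careful about is the book-keeping of the degree shift $Ng-1$ when translating between graph-complex degree $i$ and topological degree $j$. Combining the two inequalities yields exactly the stated range $1 - Ng \leq i \leq 3(g-1) - Ng$.
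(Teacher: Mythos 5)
Your proof is correct and is exactly the argument the paper intends: the corollary is read off from the isomorphism \eqref{eq:iso_comm} together with $\dim MG_g = 3g-4$ from \cref{prop:dim_mod_space}, with the degree shift $Ng-1$ handled as you do. Nothing further is needed.
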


\begin{rem} \label{rem:locfin}
Since $MG_g$ is compact, and $MG_g^{\geq1}$ is a subcomplex with complement $MG_g^{0}=MG_g \setminus MG_g^{\geq1}$, we can rephrase \cref{eq:iso_comm} using \textit{locally finite} (aka \textit{Borel-Moore}) homology,
\[ 
H_\bullet(\comm_N^{(g)}) \cong H_{\bullet+Ng-1}^{lf}(MG_g^0).
\]
\end{rem}

%%%%%%%%%%%%%%%%%%%%%%%%%%%%%%%%%%%%%

\subsection{$\gf_N$ for $N$ even ($\lie_N$ for $N$ odd)} \label{ss:lie_cellular}

A similar argument works for odd Lie graph homology. From the work of Culler and Vogtmann \cite{cv} we know that $MG_g^0$ and its spine $S_g$ (\cref{thm:spine_retract}) are rational classifying spaces for the group $\mathrm{Out}(F_g)$. We now show that the forested graph complex $\gf_N$, $N$ even, computes the homology of $S_g$.

Recall the cubical decomposition 
\[
S_g =  \Big( \bigcup_{(G,F)} \ti{c}(G,F)  \Big)/\sim .
\]
From this we proceed as in the previous section, replacing in each argument the cells $\Delta_G$ by cubes $c(G,F)$. The key ingredient for the compatibility of orientations is again a cone property of the quotient cubes $\ti{c}(G,F)$:

\begin{lem} \label{lem:cones_over_boundary_2}
Each quotient cube $\ti c(G,F)$ is a cone over its boundary $\partial \ti c(G,F)$. Therefore, 
\[
H_{e_F}(\ti c(G,F),\partial \ti c(G,F))\cong \begin{cases} \QQ & \text{ if $(G,F)$ has no odd symmetry,\footnotemark } \\
0 & \text{ else.}
\end{cases}
\]
\footnotetext{Note that $G$ alone is allowed to have odd symmetries; see \cref{sss:odd_symmetries}.} 
\end{lem}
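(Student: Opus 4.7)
The plan is to follow the same pattern as in \cref{lem:cone_over_boundary}, now for cubes instead of simplices. First I would observe that the cube $\cube \cong [0,1]^{e_F}$ has an evident cone structure with apex the barycenter $p_0 = (1/2,\ldots,1/2)$, realized by the straight-line contraction $(x,t) \mapsto (1-t)p_0 + tx$. Since $\Aut(G,F)$ acts on $\cube$ by permuting the coordinates indexed by $E_F$, the barycenter is fixed and the contraction is $\Aut(G,F)$-equivariant, so it descends to a cone structure on the quotient $\ti c(G,F)$ with apex the image of $p_0$. This handles the first part of the statement, modulo the caveat in the footnote of \cref{lem:cone_over_boundary} that for pairs $(G,F)$ with odd symmetries the boundary of this cone acquires extra components.

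Next, because the cone is contractible, the long exact sequence of the pair $(\ti c(G,F), \partial \ti c(G,F))$ collapses to
\[
H_{e_F}\bigl(\ti c(G,F),\partial \ti c(G,F)\bigr) \ \cong \ \tilde H_{e_F-1}\bigl(\partial \ti c(G,F)\bigr).
\]
Now $\partial c(G,F)$ is an $(e_F-1)$-sphere (the boundary of a combinatorial cube), and $\partial \ti c(G,F) = \partial c(G,F)/\Aut(G,F)$ with the induced action being simply the restriction of the coordinate-permutation action. Feeding this into \cref{lem:quotient_homology} immediately yields either $\QQ$ or $0$, depending on whether $\Aut(G,F)$ contains an element that acts on $S^{e_F-1}$ by an orientation-reversing homeomorphism.

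The final step, which I expect to be the main obstacle, is identifying ``orientation-reversing'' with ``odd symmetry of $(G,F)$''. A permutation $\sigma$ of the $e_F$ cube coordinates extends to a linear map on $\RR^{E_F}$ of determinant $\sgn(\sigma)$, so the action on $\partial c(G,F)$ is orientation-reversing iff some $\varphi \in \Aut(G,F)$ acts on $E_F$ by an odd permutation---equivalently, iff $\varphi^*$ reverses the orientation on $\det \QQ^{E_F}$. The delicate point is that the full orientation of $(G,F)$ lives in $\det\QQ^{E_F}\otimes\det\homgra$, so the algebraic notion of ``odd symmetry'' combines signs from both factors, whereas the cube's topological orientation only sees the $\det\QQ^{E_F}$ factor. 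The reconciliation---how the $\det\homgra$ part enters through a local coefficient system on $S_g$, and how symmetries of $G$ that are not symmetries of the pair $(G,F)$ behave---is exactly what is deferred to \cref{sss:odd_symmetries}; with that understood, the two notions coincide and the lemma follows.
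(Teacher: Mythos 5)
Your argument is correct and is essentially the paper's own: there the proof simply refers back to \cref{lem:cone_over_boundary}, i.e.\ the quotient cube is a cone (apex the image of the fixed center) over the image of $\partial c(G,F)\cong S^{e_F-1}$, and \cref{lem:quotient_homology} applied to the $\Aut(G,F)$-action by coordinate permutations---orientation-reversing exactly when the permutation of $E_F$ is odd---gives the dichotomy, just as you argue. Your closing worry about the $\det \homgra$ factor is not needed here: in \cref{ss:lie_cellular} one works with $\gf_N$ for $N$ \emph{even}, where by the paper's conventions the orientation, and hence the notion of odd symmetry of the pair $(G,F)$, only involves the order on $E_F$ (the $\det\homgra$ factor enters only for the opposite parity, via the local coefficient systems of \cref{ss:odd_cellular}), and the footnote's reference to \cref{sss:odd_symmetries} is only meant to stress that $G$ alone may have odd symmetries, not to supply that reconciliation.
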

\begin{proof}
See \cref{lem:cone_over_boundary}. 
\end{proof}

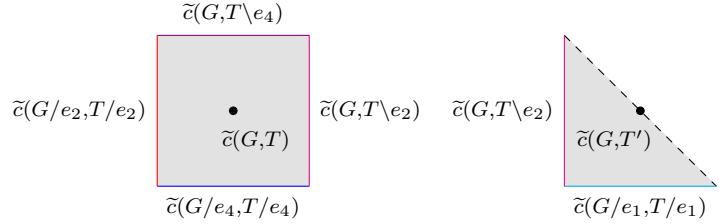
\begin{figure}[ht]
\begin{tikzpicture}
   \coordinate (ul) at (0,0);
   \coordinate  (ur) at (2,0);
   \coordinate (ol) at (0,2);
    \coordinate (or) at (2,2);
    \coordinate (c) at (1,1); 
   \filldraw[gray!23] (ul) -- (ur) --(or) -- (ol) -- (ul) node [xshift=1.3cm,yshift=.6cm]{$\color{black}  {\scriptstyle \ti c(G,T)}$};
   \draw[blue] (ul) -- (ur) node [midway,below]{$\color{black}  {\scriptstyle \ti c(G/e_4,T/e_4)}$}; 
   \draw[magenta] (ur) -- (or) node [midway,right]{$\color{black}  {\scriptstyle \ti c(G,T \setminus e_2)}$}; 
   \draw[violet] (or) -- (ol) node [midway,above]{$\color{black}  {\scriptstyle \ti c(G,T\setminus e_4)}$}; 
   \draw[red] (ol) -- (ul) node [midway,left]{$\color{black}  {\scriptstyle \ti c(G/e_2,T/e_2)}$}; 
      \filldraw[fill=black] (c) circle (0.05);
  \end{tikzpicture} 
    \begin{tikzpicture}
   \coordinate (ul) at (0,0);
   \coordinate  (ur) at (2,0);
   \coordinate (ol) at (0,2);
    \coordinate (or) at (2,2);
     \coordinate (c) at (1,1);
   \filldraw[gray!23] (ul) -- (ur) --(ol) -- (ul) node [xshift=.66cm,yshift=.6cm]{$\color{black}  {\scriptstyle \ti c(G,T')}$};
   \draw[dashed] (ol) -- (ur);
   \draw[cyan] (ul) -- (ur) node [midway,below]{$\color{black}  {\scriptstyle \ti c(G/e_1,T/e_1)}$}; 
   \draw[magenta] (ul) -- (ol) node [midway,left]{$\color{black}  {\scriptstyle \ti c(G,T \setminus e_2)}$}; 
   \filldraw[fill=black] (c) circle (0.05);
  \end{tikzpicture}
\caption{Two quotient cubes $\ti c(G,T)$ for the pairs $(G,\set{e_2,e_4})$ and $(G,\set{e_1,e_2})$ from \cref{fig:cubes_in_simplex}. Note that $(G,\set{e_1,e_2})$ has an odd symmetry. Both are cones over their boundary components, the black dots denoting the respective vertices of the cones.}
\label{fig:cubes_and_quotients}
\end{figure}

Combining everything as in \cref{ss:comm_cellular} we see that the forested graph complex computes the homology of $S_g$,
\[
H_\bullet(\gf_N)\cong H_{\bullet+Ng}(S_g)\cong H_{\bullet+Ng}(MG_g^0).
\]
Together with \cref{thm:Lie_and_Out} and \cref{cor:for_gc_and_lie_gc} we arrive at the desired connection between the moduli space of graphs and odd Lie graph homology,
\[
H_{\bullet} (\lie_N^{(g)}) \cong  \left( H_{(3-3N)g-3-\bullet}(MG_g^0) \right)^\vee .
\]

The dimension bounds on $MG_g^0$ or its spine $S_g$ (\cref{cor:dim_subspaces}) imply 
\begin{cor}
 For $N$ odd $ H_i(\lie_N^{(g)})=0$ unless $(1-3N)g\leq i \leq (3-3N)g-3$.
\end{cor}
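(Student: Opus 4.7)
The plan is quite direct: combine the isomorphism
\[
H_\bullet (\lie_N^{(g)}) \cong \left( H_{(3-3N)g-3-\bullet}(MG_g^0) \right)^\vee
\]
displayed immediately above the corollary with the dimension bound supplied by \cref{cor:dim_subspaces}. So the work is really just an index calculation on top of results already in hand.

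First, I would use the homotopy equivalence $MG_g^0 \simeq S_g$ from \cref{thm:spine_retract} to replace $H_\bullet(MG_g^0)$ by $H_\bullet(S_g)$. Since $\mathcal{G}_g$ is finite, $S_g$ is a finite CW-complex, and its dimension is $2g-3$ by \cref{cor:dim_subspaces}. Hence its rational homology is concentrated in degrees $[0, 2g-3]$, and the same range applies after taking the $\QQ$-linear dual.

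The final step is to translate the condition $0 \leq (3-3N)g-3-i \leq 2g-3$ into bounds on $i$. Non-negativity of $(3-3N)g-3-i$ forces $i \leq (3-3N)g-3$, while the upper bound forces $i \geq (3-3N)g-3-(2g-3) = (1-3N)g$, which together give exactly the stated range.

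I do not anticipate any real obstacle: the corollary is essentially a two-line consequence of the displayed isomorphism combined with a dimension count. The only thing worth sanity-checking when writing out the proof is the arithmetic of the degree shift $(3-3N)g-3$ itself, since it is obtained by composing the shift $Ng$ coming from $H_\bullet(\gf_N^{(g)}) \cong H_{\bullet+Ng}(MG_g^0)$ with the shift $(3-2N)g-3$ coming from \cref{cor:for_gc_and_lie_gc}; it is prudent to retrace this chain once before concluding.
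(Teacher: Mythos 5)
Your proposal is correct and follows exactly the paper's intended argument: combine the displayed isomorphism $H_\bullet(\lie_N^{(g)}) \cong \bigl( H_{(3-3N)g-3-\bullet}(MG_g^0) \bigr)^\vee$ with the fact that $MG_g^0$ retracts onto the spine $S_g$ of dimension $2g-3$ (\cref{cor:dim_subspaces}), so that the homology is concentrated in degrees $0$ through $2g-3$, and then solve $0 \leq (3-3N)g-3-i \leq 2g-3$ for $i$. The index arithmetic checks out, and your observation that one must use the spine's dimension $2g-3$ rather than $\dim MG_g^0 = 3g-4$ (which would give a weaker lower bound) is precisely the point.
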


\subsection{A cubical variant of $\comm_N$.}
The arguments in the previous section can also be used to define a cubical complex that computes commutative graph homology.

Recall the cubical decomposition \eqref{eq:MG_cube_decomposition} of $MG_g$: Each simplex $\Delta_G$ decomposes into an union of cubes $c(G,\gamma)$ where $\gamma \subsetneq G$. This decomposition descends to the quotients cells $\ti \Delta_G$ and $\ti c(G,\gamma)$. It follows that we can set up a complex of pairs $\gp_N$ which is generated by elements $(G,\gamma)$, $\gamma \subsetneq G$, without odd symmetries (i.e., there is no $\varphi \in \Aut(G)$ that induces an odd permutation on $E_\gamma$). The differential is the same as for $\gf_N$, $D=d-\delta$ where $d$ collapses edges in $\gamma$ and $\delta$ removes edges from $\gamma$ (\cref{eq:cube_diffential}).

    \subsection{Let's get odd} \label{ss:odd_cellular}
Also the graph complexes with opposite parity ($\comm_N$ for odd $N$, $\lie_N$ for even $N$) may be realized via chain complexes associated to moduli spaces in the manner described above. The previous case relied on the fact that orientations on $G$ are in one-to-one correspondence to orientations on the cells $\Delta_G$ (or $\ti{\Delta}_G$). For the opposite parity the same holds if we replace the trivial coefficient system $\QQ$ by a system of local coefficients. This was already mentioned by Kontsevich in \cite{ko1,ko2}. See also \cite[\S 5]{convogt} and \cite[\S 4]{Ww-Turchin:CommHairyGraphs}. 
\newline

%\subsubsection*{Digression on local coefficients} \label{sss:digression}
 To recall the definition of local coefficient systems (in the most basic case), we follow \cite{Steenrod:local_coeff} (see also \cite{hatcher:AT}).

A \textbf{bundle of groups} with fiber $\Gamma$ is a map of topological spaces $p\colon E \to X$ with: Every $x\in X$ has a neighbourhood $U$ such that there exists a homeomorphism $h\colon \inv{p}(U) \to U \times \Gamma$ ($\Gamma$ is endowed with the discrete topology) such that for any $y\in X$ the restriction $h|_{\inv{p}(y)} \colon \inv{p}(y) \to \set{y} \times \Gamma$ is a group isomorphism.

Let now $\Gamma$ be abelian and suppose that $X$ is a cell complex with the interior of every cell simply connected.

We choose in every cell $\sigma$ a point $x_\sigma \in \overset{\circ}{\sigma}$ and denote by $\Gamma_\sigma$ the group $\inv{p}(x_\sigma) \cong \Gamma$. If $\tau \subset \partial \sigma$, then using the simply connectedness of $\overset{\circ}{\sigma}$ and local trivializations of the bundle $p$ we get an isomorphism $h_{\sigma\tau} \colon \Gamma_\sigma \to \Gamma_\tau$, unique up to homotopy. 
\newline

Let $C(X;E)=\bigoplus_{n\geq 0} C_n(X;E)$ with
\[
C_n(X;E) = \left\{ \sum_i \ell_i\cdot \sigma_i \ \middle\vert \begin{array}{l}
    \sigma_i \colon \Delta^n \to X,  \ \ell_i \in \Gamma_{\sigma_i},\\
     \ell_i \cdot (-\sigma_i)= - \ell_i \cdot \sigma_i
  \end{array}\right\}
\]
and define $\partial \colon C_n(X;E) \to C_{n-1}(X;E)$ by
\[
\partial \sum_i \ell_i \cdot \sigma_i = \sum_i \sum_{j=0}^n (-1)^j h_{\sigma_i \sigma_i^j}(\ell_i) \cdot \sigma_i^j
\]
where $\sigma^j$ denotes the ``$j$-th face of $\sigma$". We have $\partial^2=0$, so $\left( C(X;E), \partial \right)$ is a chain complex. Its homology $H_\bullet(X;E)$ is called the \textbf{homology of $X$ with coefficients in $E$}.

\begin{example}
    If $E=X\times \Gamma$, then each $\Gamma_\sigma=\Gamma$, each $h_{\sigma \tau}=\id_\Gamma$, and therefore $H(X;E)$ is just the homology of $X$ with coefficients in $\Gamma$. 
\end{example}

\begin{example}
    Let  
    \[
    E=\bigcup_{ G\in \mathrm{ob}(\m G_g) } \ti \Delta_G \times \Gamma_G, \quad \Gamma_G = \QQ \otimes \det \homgra,
    \]
    with $p\colon E \to MG_g$ the projection onto the first factor. Then the homology of $MG_g$ with coefficients in $E$ computes odd commutative graph homology, 
   \[
   H_{n}(MG_g;E) \cong H_{n-Ng+1}(\comm_N^{(g)}) \quad (\text{for each odd } N.)
   \]
\end{example}

\begin{example}
    Let  
    \[
    E=\bigcup_{(G,F)\in \mathrm{ob} (\m P_g) } \ti c(G,F) \times \Gamma_G, \quad \Gamma_G =\QQ \otimes \det \homgra,
    \]
    with $p\colon E \to S_g$ the projection onto the first factor. Then the homology of $S_g$ with coefficients in $E$ computes even Lie graph homology, 
   \[
   \left( H_{n}(S_g;E) \right)^{\vee} \cong H_{(3-3N)g-3-\bullet}(\lie_N^{(g)}) \quad (\text{for each even } N.)
   \]
\end{example}

\begin{rem}
There are obviously more modern, more general and more effective ways to define and study homology with local coefficients. In particular cohomology with local coefficients can be formulated using a \textit{twisted de Rham complex} \cite{Aomoto:Th_of_Hg_fnctns} whose cochains are generated by ordinary differential forms, but the exterior differential gets an additional term $d\mapsto d + \omega \wedge$ arising from the wedge product with a 1-form $\omega$ (the \textit{twist}). In \cite{brown21} Francis Brown set up a de Rham theory for the moduli space of graphs to study even commutative graph homology. It would be interesting to see if the above notion of local coefficients can be translated into a twist $\omega_E$, and then use similar ideas to study odd commutative graph homology. 
\end{rem}

\subsection{Further simplification of the complexes} \label{ss:simplify_cplxs}

We use the geometric point of view to discuss different versions of our graph complexes.

\subsubsection{Graphs without bivalent vertices}
 In \cref{thm:bigger_cplxs} we claimed that $\comm_N^{\neq2}$ is quasi-isomorphic to $\comm_N$. With the established connection between graph complexes and moduli spaces of graphs we can show this geometrically. Furthermore, the result applies then automatically to the Lie case as well (and with minor adjustments also to the associative case).

Let $MG_g^{\neq2}$ denote the moduli space of weighted normalized metric graphs that may have bivalent vertices of weight zero, but must have at least one stable vertex.  

 \begin{prop} \label{prop:def_retract_bivalent_edges}
     $MG_g$ is a deformation retract of $MG_g^{\neq2}$. 
 \end{prop}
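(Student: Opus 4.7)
The plan is to construct a deformation retract that collapses each \emph{bivalent chain} of a graph $G\in\mathrm{ob}(\m G_g)$ --- a maximal edge-path whose interior vertices are all weight-zero bivalent and whose endpoints are stable --- onto a single edge carrying the chain's total length. The retraction is built cell-wise and then shown to descend to $MG_g^{\neq 2}$.

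Fix $G$ with bivalent chains $c_1,\ldots,c_r$, where $c_j$ has $k_j$ edges and total length $L_j$. On the chain block $\RR_{\geq 0}^{k_j}\subset\sigma(G)$, I would use the \emph{sort-and-shift} prescription: writing the sorted chain lengths as $y_1\leq\cdots\leq y_{k_j}$, at time $t\in[0,1]$ send the multiset $\{y_i\}$ to
\[
\bigl\{(1-t)y_1,\ (1-t)y_2,\ \ldots,\ (1-t)y_{k_j-1},\ L_j-(1-t)(y_1+\cdots+y_{k_j-1})\bigr\}.
\]
At $t=0$ this is the identity on multisets and at $t=1$ every entry but the largest is zero, so the chain reduces to a single edge of length $L_j$ between its stable endpoints. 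Applied simultaneously on all chains (leaving non-chain coordinates fixed), this defines a cellwise homotopy $H^G_t$.

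Three properties then need to be verified: (i) \emph{descent to $\ti\sigma(G)$} --- the prescription depends only on the multiset of chain-edge lengths, hence is invariant under any chain-permuting subgroup of $\Aut(G)$; (ii) \emph{face compatibility} --- if an edge $e$ is collapsed ($x_e=0$), it appears as the smallest entry of the sorted multiset and dropping it recovers exactly the sort-and-shift on the shortened chain in $G/e$; if $e$ is non-chain, the chain blocks are untouched; (iii) \emph{landing in $MG_g$} --- at $t=1$ every chain has all but one of its edges collapsed, so the image graph has no weight-zero bivalent vertices.

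The main obstacle is the continuity of the assembled homotopy. Sort-and-shift is well-defined on multisets, but recovering a representative in $\ti\sigma(G)$ requires an assignment of output lengths back to chain edges, which can fail to be continuous at tied chain configurations when $\Aut(G)$ does not contain the full chain-permutation group. The cleanest resolution is to reformulate the construction as a fiberwise contraction of the projection $\pi\colon MG_g^{\neq 2}\to MG_g$ that sums chain-edge lengths: the fibers of $\pi$ factor as products of ``edge-subdivision spaces'' $\Sigma(L)$, each a contractible CW-complex, and the fiberwise contraction lifts to the required global deformation retract once one checks that the face identifications in $MG_g^{\neq 2}$ coherently glue the various ``surviving edge'' choices.
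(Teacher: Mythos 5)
You are following the same overall strategy as the paper: work cell by cell, collapse each maximal bivalent chain onto a single surviving edge carrying the chain's total length, then verify face compatibility and descent to the quotient cells $\ti\Delta_G$. The difference is the explicit homotopy, and that is where your proposal has a genuine gap, one you partly diagnose yourself but do not close. Sort-and-shift is well defined only on the \emph{multiset} of chain-edge lengths, whereas a point of $\ti\Delta_G=\Delta_G/\Aut(G)$ retains more information than that multiset: within a single chain of $k$ edges, $\Aut(G)$ acts through at most the reversal $e_i\mapsto e_{k+1-i}$, never the full symmetric group $S_k$. Consequently ``boost the currently largest edge'' is discontinuous at ties: for $k=2$ the two one-sided limits at $x_1=x_2$ are $(1-t,1+t)$ and $(1+t,1-t)$, which are distinct points of $\ti\Delta_G$ unless the chain happens to admit a reversal symmetry, and no other sorting-based assignment rule does better. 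So $H^G_t$, as written, is neither continuous on $\Delta_G$ nor well defined on $\ti\Delta_G$. Your proposed repair via a fiberwise contraction of the length-summing projection $\pi\colon MG_g^{\neq2}\to MG_g$ names the right object but defers exactly the content of the proposition: one must still exhibit a \emph{continuous} contraction of each fiber (a quotient of a product of subdivision spaces by the residual automorphisms) varying continuously over the base and compatibly with the face identifications --- the same equivariance problem in different clothing.

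For comparison, the paper avoids sorting altogether: it fixes a labelling $e_1,\dots,e_k$ of each chain and uses the linear homotopy $[x:y]\mapsto[x_1+t\sum_{i\ge2}x_i:(1-t)x_2:\dots:(1-t)x_k:y]$, which is continuous on $\Delta_G$, restricts to the identity on the target face $\ti\Delta_{G'}$, and visibly commutes with the face maps. (This, too, silently requires the designated surviving edge to be chosen $\Aut(G)$-equivariantly --- a chain-reversing automorphism exchanges $e_1$ and $e_k$ --- so some care is needed there as well; but the linear formula at least eliminates the continuity failure that sorting introduces.) To salvage your version you would either have to make such an equivariant choice of surviving edge explicit, or actually carry out the fiberwise contraction argument rather than assert it.
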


 \begin{proof}
     Consider a simplex $\Delta_G$ where $G$ is a graph with at least one bivalent weight zero vertex. Every such vertex lies on a unique edge-path between two stable vertices, or on a cycle based at one stable vertex. Suppose the edges in the path/cycle are labeled by $e_1,\ldots,e_k$. Let us denote a point in $\Delta_G$ by $[ x: y ]$ where $x=(x_1,\ldots,x_k)$ and $y$ represents the variables associated to the other edges of $G$, and by $[ x: y ]_\sim$ the corresponding equivalence class in $\ti \Delta_G$. Finally, let $G'=G/\set{e_2,\ldots,e_k}$ and note that all graphs $G/\set{e_1,\ldots,\widehat{e_i},\ldots,e_k}$ are isomorphic.

     Define a map $f_{G} \colon \ti \Delta_G  \times [0,1] \to \ti \Delta_G$ by
     \[
     \left( [x:y]_\sim, t \right) \longmapsto [ x_1 +  t\sum_{i=2}^k x_i : (1-t) x_2 : \ldots : (1-t) x_k :y   ]_\sim.
     \]
     It is continuous, $f_G(\cdot,0)$ is the identity on $\ti \Delta_G$, and $f_G(\cdot,1)$ satisfies $\image{f_G(\cdot,1)} = \ti \Delta_{G'}$ as well as ${f_G(\cdot,1)}|_{\ti \Delta_{G'}}=\id_{\ti \Delta_{G'}}$. This shows that $\ti \Delta_{G'}$ is a deformation retract of $\ti \Delta_G$ (if $G$ has additional bivalent vertices on different edge-paths/cycles, adjust the definition of $f_G$ in the obvious way).
     
     The restriction to a face $\set{y_j=0}$ agrees with the corresponding map $f_{G/e_j}$, so we can assemble the maps $f_G$ to define a deformation retraction of $MG_g^{\neq2}$ to $MG_g$.
 \end{proof}

We can repeat the construction in \cref{ss:comm_cellular} to show that $\comm_N^{\neq 2}$ computes relative homology of $MG_g^{\neq2}$. The argument is the same, although a little more convoluted since $MG_g^{\neq2}$ is infinite dimensional. Together with the previous proposition this implies then the desired result:

 \begin{cor}
     $H(\comm_N)=H(\comm_N^{\neq 2})$, and similar for $\lie_N$.
 \end{cor}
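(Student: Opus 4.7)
The plan is to combine Proposition \ref{prop:def_retract_bivalent_edges} with a cellular model for $\comm_N^{\neq 2}$ that mirrors the one established in Section \ref{ss:comm_cellular} for $\comm_N$. Concretely, I would first build the obvious analogue of the cellular chain complex $C_\bullet$ from Section \ref{ss:comm_cellular}, but now generated by pairs $(\ti \Delta_G,\eta)$ where $G$ is allowed to have bivalent (weight-zero) vertices as long as at least one vertex is stable. The simplicial boundary map descends to this enlarged complex unchanged, and the natural map to singular chains
\[
i^{\neq 2} \colon C_\bullet^{\neq 2} \longrightarrow C_\bullet^{sing}(MG_g^{\neq 2})
\]
is defined exactly as before via barycentric subdivision composed with the quotient by $\Aut(G)$.

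Next I would establish the analogue of Proposition \ref{prop:quasi_iso} in this setting. The cone lemma (\cref{lem:cone_over_boundary}) applies verbatim, since it only uses that $\ti \Delta_G$ is a cone over its boundary and the quotient-homology computation from \cref{lem:quotient_homology}. Filtering $MG_g^{\neq 2}$ by the subcomplexes $X^p$ of graphs with at most $p+1$ edges gives a filtration whose associated graded agrees, cell by cell, with the associated graded of $C_\bullet^{\neq 2}$. The five-lemma induction on $p$ now goes through just as in the proof of \cref{prop:quasi_iso}; the only new feature is that the filtration is unbounded, since edges can be subdivided arbitrarily often, but $MG_g^{\neq 2}=\varinjlim_p X^p$ and singular homology commutes with sequential colimits along cofibrations, so passing to the limit yields
\[
H_\bullet(C_\bullet^{\neq 2},\partial) \cong H_\bullet^{sing}(MG_g^{\neq 2}).
\]
Composing with the projection $\comm_N^{(g),\neq 2} \to C_\bullet^{\neq 2}$ and keeping track of the weight-$\geq 1$ subcomplex exactly as in \eqref{eq:iso_comm}, I obtain
\[
H_\bullet(\comm_N^{(g),\neq 2}) \cong \ti H_{\bullet + Ng -1}(MG_g^{\neq 2}).
\]

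Finally, the deformation retraction $MG_g^{\neq 2}\simeq MG_g$ from \cref{prop:def_retract_bivalent_edges} gives an isomorphism of the right-hand sides with $\ti H_{\bullet + Ng-1}(MG_g)$, which by \eqref{eq:iso_comm} is $H_\bullet(\comm_N^{(g)})$. Summing over $g$ yields the first claim. For the Lie case I would run the identical argument with the cubical model: enlarge $\gf_N$ to allow bivalent-vertex graphs, show (exactly as in Section \ref{ss:lie_cellular}, using \cref{lem:cones_over_boundary_2}) that it computes $H_\bullet(S_g^{\neq 2})$, and then observe that the retraction of \cref{prop:def_retract_bivalent_edges} restricts to a deformation retraction of the enlarged spine onto $S_g$ (the map $f_G$ preserves the forest condition on the collapsed subgraph). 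Dualising and applying \cref{cor:for_gc_and_lie_gc} gives the Lie statement.

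The main obstacle I expect is the infinite-dimensionality of $MG_g^{\neq 2}$: the quasi-isomorphism of $C_\bullet^{\neq 2}$ with singular chains needs a careful colimit argument rather than the straightforward finite induction used in \cref{prop:quasi_iso}. For the Lie variant there is an additional minor subtlety in checking that the retraction $f_G$ interacts correctly with the forest structure, but since $f_G$ merely rescales edge-lengths along an edge-path between stable vertices (and eventually collapses bivalent edges onto a single edge), forests are sent to forests and the retraction descends to the cubical spine.
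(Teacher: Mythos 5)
Your proposal matches the paper's argument: the paper likewise repeats the cellular construction of \cref{ss:comm_cellular} for $MG_g^{\neq 2}$ (noting only that it is "a little more convoluted" because the space is infinite dimensional) and then concludes via \cref{prop:def_retract_bivalent_edges}. Your explicit handling of the unbounded filtration by a colimit argument, and the check that the retraction respects the forest/cubical structure in the Lie case, supply details the paper leaves implicit but do not change the route.
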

 
\subsubsection{Bridge-free graphs} Another simplification is to restrict the complexes and spaces to the case of bridge-free (aka core or 1-particle irreducible) graphs. For the moduli space of weight zero graphs $MG_g^0$ we can define a deformation retraction onto the subspace formed by bridge-free metric graphs by collapsing all bridges simultaneously, as in the proof of the previous proposition.

For the full moduli space $MG_g$ this does \emph{not} work, however. Algebraically phrased, $\comm_N$ is not quasi-isomorphic to the subcomplex spanned by bridge-free graphs (cf.\ \cref{rem:1vi}). To see this consider the example illustrated in \cref{fig:def_retract_handle}: The deformation retraction indicated there can be extended continuously to all of the red subcomplex except the vertex at the top, and the same is true for the quotient cell $\ti \Delta_G \subset MG_g$. 

\begin{figure}[h]
 \begin{tikzpicture}[scale=.9]
\coordinate (l) at (-3,0);
  \coordinate  (r) at (3,0); 
   \coordinate  (o) at (0,3);
   \draw[blue] (l) -- (r);
    \fill[blue!10] (l) -- (r) -- (o) -- (l);
   \draw[red!50] (l) -- (o) ;
   \draw[red!50] (r) -- (o) ;
 \fill[fill=red!50] (l) circle (0.06) node[below]{};
 \fill[fill=red!50] (r) circle (0.06) node[below]{};
 \fill[fill=red!50] (o) circle (0.06) node[above]{};
 %%% graphs
   \coordinate (v1) at (-.23,1.1);
  \coordinate (v2) at (.23,1.1);
   \draw[scale=2.5] (v1) to[in=-140,out=130,loop] node[midway,xshift=-.1cm] {} (v1);
   \draw (v1) -- node[above,yshift=-.1cm] {} (v2);
   \draw[scale=2.5] (v2) to[in=50,out=-40,loop] node[midway,xshift=.1cm] {} (v2);
  \filldraw[fill=black] (v1) circle (0.05);
  \filldraw[fill=black] (v2) circle (0.05);
  \coordinate (w1) at (0,-.5);
  \draw[scale=2.5] (w1) to[in=-140,out=130,loop] node[midway,xshift=-.1cm] {} (w1);
     \draw[scale=2.5] (w1) to[in=50,out=-40,loop] node[midway,xshift=.1cm] {} (w1);
  \filldraw[fill=black] (w1) circle (0.05); 
  \coordinate (r1) at (1.8,1.5);
  \coordinate (r2) at (2.3,1.5);
   \draw (r1) node[above,yshift=0cm] {$\scriptstyle{1}$} -- (r2);
   \draw[scale=2.5] (r2) to[in=50,out=-40,loop] node[midway,xshift=.1cm] {} (r2);
  \filldraw[fill=black] (r1) circle (0.05);
  \filldraw[fill=black] (r2) circle (0.05);
  \coordinate (l1) at (-1.8,1.5);
  \coordinate (l2) at (-2.3,1.5);
   \draw (l1) node[above,yshift=0cm] {$\scriptstyle{1}$} -- (l2);
   \draw[scale=2.5] (l2) to[in=-140,out=130,loop] node[midway,xshift=.1cm] {} (l2);
  \filldraw[fill=black] (l1) circle (0.05);
  \filldraw[fill=black] (l2) circle (0.05);
  \coordinate (ll) at (-3.5,0);
   \draw[scale=2.5] (ll) to[in=-140,out=130,loop] node[at end,xshift=.2cm] {$\scriptstyle{2}$} (ll);
  \filldraw[fill=black] (ll) circle (0.05);
  \coordinate (rr) at (3.5,0);
   \draw[scale=2.5] (rr) to[in=50,out=-40,loop] node[at end,xshift=-.2cm] {$\scriptstyle{2}$} (rr);
  \filldraw[fill=black] (rr) circle (0.05);
    \coordinate (lo) at (-.25,3.3);
  \coordinate (ro) at (.25,3.3);
  \draw (lo) node[left] {$\scriptstyle{1}$} -- (ro) node[right] {$\scriptstyle{1}$};
    \filldraw[fill=black] (lo) circle (0.05);
  \filldraw[fill=black] (ro) circle (0.05);
  \end{tikzpicture} 
   \caption{The simplex $\Delta_G$ for the dumbbell graph, depicted in the center. The red part belongs to the subcomplex of graphs with positive weight, its complement (in blue) deformation retracts onto the baseline (send the bridge's length to zero) which is an open cell in the weight zero subspace.} \label{fig:def_retract_handle}
\end{figure}
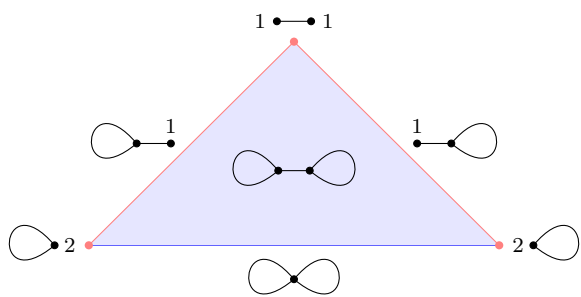

\subsubsection{Pairs $(G,F)$ where $G$ has on odd symmetry} \label{sss:odd_symmetries}
Let $N$ be even. Recall that in the forested graph complex $\gf_N$ only pairs $(G,F)$ without odd symmetries contribute. However, $G$ itself is allowed to have odd symmetries. Suppose $G$ is such a graph with odd symmetry. \Cref{lem:cone_over_boundary} implies that collapsing the cell $\ti \Delta_G$ to its cone point is a homotopy equivalence. In this way we could get rid of all cells indexed by graphs with odd symmetries, and obtain a space $MG_g^*$ which is homotopy equivalent to $MG_g$.

Since the forested graph complex arises from a cubical subdivison of $MG_g$, one might wonder why we cannot discard all pairs $(G,F)$, where $G$ has an odd symmetry, regardless of $F$, to get a smaller complex $\gf_N^*$ which is quasi-isomorphic to $\gf_N$.

The problem with this argument is that this new complex of pairs $\gf_N^*$ does not arise from a cubical subdivision of $MG_g^*$: This can be seen already for $g=2$ (which actually is a bad example, since $MG_2$ is contractible, but suffices to demonstrate the problem).

It would be interesting though to check whether the space $MG_g^*$ indeed allows for a ``nice" cellular decomposition which gives rise to a smaller (forested) graph complex.

\subsection{The associative graph complex} \label{ss:ass_cellular}

It is well-known that the homology of mapping class groups can be studied through a moduli space of ribbon graphs \cite{Stre84:QD,Penner:Fatgraphs}. In the same spirit as in \cref{s:moduli_space_of_graphs} one can define moduli spaces of ribbon graphs $MRG_g$, and then proceed as above to show that the associative graph complexes compute their homology groups; see \cite[Theorem 3.2]{ko1} or \cite{Igusa:GC_and_Ko_cycles}. 

Note that, if $G$ is a graph with ribbon structure $\sigma=\set{\sigma_v}_{v\in V(G)}$, then the automorphisms of $G$ that preserve $\sigma$ form a subgroup $\Aut(G,\sigma) \subset \Aut(G)$. As a consequence, $MRG_g$ is built from quotient cells $\Delta_G / \Aut(G,\sigma)$ that differ from the building blocks of $MG_g$. This means we can not realise the surface subcomplexes $\ass_N^{(g,S)}$ as subsets of $MG_g$---we have to use cells in $MRG_g$. There is however an interesting forgetful map between these spaces that discards the ribbon structure; see \cite[\S 3]{ko1}. 

To view both spaces from a unified point of view one has to go one step ``above", to Outer space, as explained in \cite{convogt} (see also \cite{Forrest:degree_thm_for_rgraphs}).

\bibliographystyle{alpha}
\bibliography{ref}

\end{document}